\newtheorem{theorem}{Theorem}[section]
\newtheorem{lemma}{Lemma}[section]
\newtheorem{corollary}{Corollary}[section]
\numberwithin{equation}{section}
\newtheorem{assumption}[theorem]{Assumption}
\theoremstyle{definition}
\theoremstyle{remark}
\newcommand{\brac}[1]{\left(#1\right)}
\newcommand{\brab}[1]{\left\{#1\right\}}
\newcommand{\set}[2]{\{#1\,:\,#2\}}
\newcommand{\norm}[2]{\left\|{#1}\right\|_{#2}}
\newcommand{\ba}{{\boldsymbol{a}}}
\newcommand{\bb}{{\boldsymbol{b}}}
\newcommand{\bk}{{\boldsymbol{k}}}
\newcommand{\bs}{{\boldsymbol{s}}}
\newcommand{\bx}{{\boldsymbol{x}}}
\newcommand{\by}{{\boldsymbol{y}}}
\newcommand{\bz}{{\boldsymbol{z}}}
\newcommand{\bgamma}{{\boldsymbol{\gamma}}}
\newcommand{\brho}{{\boldsymbol{\rho}}}
\newcommand{\bvarrho}{{\boldsymbol{\varrho}}}
\newcommand{\bsigma}{{\boldsymbol{\sigma}}}
\newcommand{\balpha}{{\boldsymbol{\alpha}}}
\newcommand{\bmu}{{\boldsymbol{\mu}}}
\newcommand{\rd}{{\rm d}}
\def\II{\mathbb I}
\def\ZZd{{\mathbb Z}^d}
\def\ZZ{{\mathbb Z}}
\def\RR{{\mathbb R}}
\def\NN{{\mathbb N}}
\def\II{{\mathbb I}}
\def\CC{{\mathbb C}}
\def\CC{{\mathbb C}}
\def\NN{{\mathbb N}}
\def\RR{{\mathbb R}}
\def\Vv{{\mathbb P}}
\def\UU{{\mathbb U}}
\def\Vv{{\mathcal P}}
\def\FF{{\mathcal F}}
\def\Bb{{\mathcal B}}
\def\ZZd{{\mathbb Z}^d}
\def\IIi{{\mathbb I}^\infty}
\def\CCi{{\mathbb C}^\infty}
\def\RRi{{\mathbb R}^\infty}
\def\UUi{{\mathbb U}^{\infty}}
\def\Bb{{\mathcal B}}
\def\FF{{\mathcal F}}
\def\Jj{{\mathcal J}}
\def\Ss{{\mathcal S}}
\def\Vv{{\mathcal V}}
\def\II{{\mathbb I}}
\def\CC{{\mathbb C}}
\def\ZZ{{\mathbb Z}}
\def\NN{{\mathbb N}}
\def\RRR{{\mathbb R}}
\def\FF{{\mathbb F}}
\def\RRRi{{\mathbb R}^\infty}
\def\supp{\operatorname{supp}}
\def\dv{\operatorname{div}}
\title{\sffamily Sampling recovery in Bochner spaces and applications to \\ 
	 parametric PDEs}
\author[a]{Felix Bartel}
\affil[a]{{Mathematisch-Geographische Fakult\"at, KU Eichst\"att-Ingolstadt
	\protect\\
     85270 Eichst\"att, Germany
	\protect\\
    Email: felix.bartel@ku.de}}
\author[b]{Dinh D\~ung\footnote{Corresponding author}}
\affil[b]{Information Technology Institute, Vietnam National University, Hanoi
	\protect\\
	144 Xuan Thuy, Cau Giay, Hanoi, Vietnam
	\protect\\
	Email: dinhzung@gmail.com}
\date{\today}
\begin{document}
\maketitle

\begin{abstract}	
    We prove convergence rates of linear sampling recovery of functions in abstract Bochner spaces satisfying weighted summability of their generalized polynomial chaos expansion coefficients.
    The underlying algorithm is a function-valued extension of the least squares method widely used and thoroughly studied in scalar-valued function recovery.
    We apply our theory to two core problems in Computational Uncertainty Quantification.
    First, we address
    non-intrusive approximations of solutions to
    parametric elliptic or parabolic PDEs with log-normal or affine inputs using a 
    finite set of particular solvers.
    Second, we consider approximating infinite-dimensional holomorphic functions   that arise as solutions to more general parametric PDEs with Gaussian random field inputs.
    Our framework allows a unified treatment of the log-normal and affine input models and yields substantial improvements in the state of the art of these problems.
    More specifically, we obtain convergence rates that improve known results by a polynomial factor in the log-normal case and by a logarithmic factor in the affine case.
 
	\medskip
	\noindent
    {\bf Keywords and Phrases}: Uncertainty Quantification;  Sampling recovery; Bochner spaces; {Non-intrusive  approximation}; Least squares approximation; Parametric PDEs with random inputs; Infinite-dimensional holomorphic function; Convergence rate.
	
	\medskip
	\noindent
	{\bf Mathematics Subject Classifications (2020)}: 65C30, 65N15, 65N35, 41A25. 	
\end{abstract}

\section{Introduction and main results}\label{sec:intro}

In Computational Uncertainty Quantification, the problem of efficient approximation for parametric PDEs with random inputs is of great interest and significant progress was achieved in recent years.
The number of works on this topic is too large to mention all of them.
We refer the reader to \cite{Adcock2022,CoDe15a,DNSZ2023,GWZ2014,SG2011} for surveys and further references on various aspects on it.

{The principal distinction among numerical methods for parametric PDEs is whether they are non-intrusive or intrusive. Non-intrusive methods use an existing solver (exact or approximate) for the PDE, enabling deployment even when the solver is treated as a black box. Intrusive methods, by contrast, incorporate the explicit PDE formulation into the approximation process, demanding complete knowledge of the parametric PDE model.
For a detailed comment, see, e.g., \cite[Section 1.6]{CoDe15a}.}

We are interested in the problem of linear semi-discrete {non-intrusive} approximation of solutions $u(\by)$ to parametric PDEs and the corresponding convergence rate based on a finite number of particular solvers $u(\by_1),..., u(\by_n)$ for $\by_1,..., \by_n$ from a domain $\UUi$, where $\UUi$  usually is the infinite-dimensional domain $\IIi := [-1,1]^\infty$ or $\RRi$.
We do not consider fully discrete {(multilevel)  approximations} which simultaneously treat a discretization on both spatial and parametric domains.
The reader can consult \cite{Dung21,DNSZ2023} for surveys and further references in this direction.
In the present paper, we focus on parametric elliptic and parabolic PDEs with log-normal or affine random diffusion coefficients.
Related problems of adaptive nonlinear semi-discrete  approximation for parametric PDEs were investigated in \cite{CCS15,CoDe15a,ABDM2024,CCMNT2015,CCS13}, and of linear semi-discrete  approximation in 
\cite{ADM2024,BNT2007,Dung19,Dung21,DD-Erratum23, EST18,MNST2014,NTW2008,NTW2008a,DNSZ2023,ZDS19,ZS20}.

Solutions $u(\by)$, $\by \in \UUi$, to parametric PDEs can be considered as elements of a Bochner space $L_2(\UUi,X;\bmu)$, where
$X$ is a Hilbert space,  and $\bmu$ is a probability measure on $\UUi$. The above approximation problem is equivalent to solving a linear sampling recovery problem in the specified space.
We leverage recent breakthrough results for scalar-valued sampling recovery in reproducing kernel Hilbert spaces (RKHS) \cite{BSU23, DKU2023, KUV21, KU21a}, and transfer them to the Hilbert-valued approximation setting (see surveys and further references on related results in \cite{DTU18B, SU2023}). 
This transferring method is related to the approach of “lifting” the least squares approximation analysis from scalar-valued to Hilbert-valued settings studied in 
\cite{ABDM2023,ABDM2024,ADM2024, CCMNT2015, CM2018}, where direct least squares algorithms are employed for sampling recovery of Hilbert-valued functions. The present work reduces the Hilbert-valued sampling recovery problem to a scalar-valued one. 
This reduction facilitates a systematic study of the extension of least-squares to Bochner spaces and its applications, independently of particular Bochner spaces that arise in the approximation of parametric PDEs. Moreover, it enables the application of recent results on inequalities between sampling widths and Kolmogorov widths \cite{BSU23, DKU2023, KUV21, KU21a}, which do not hold in the Hilbert-valued approximation setting.

Before presenting the main result on sampling recovery in Bochner spaces, we need to comment on the {setup}.
Let $(U,\Sigma,\mu)$ be a probability measure space with $U$ being a separable topological space and let $X$ be a complex separable Hilbert space.
Denote by $L_2(U,X;\mu) = L_2(U,\CC;\mu) \otimes X$ the Bochner space of strongly $\mu$-measurable mappings $v$ from $U$ to $X$, equipped with the norm
\begin{equation} \nonumber
	\|v\|_{L_2(U,X;\mu)}
	:= \
	\left(\int_{U} \|v(\by)\|_X^2 \, \rd \mu(\by) \right)^{1/2}.
\end{equation}
Notice that because $U$ and $X$ are separable, so is $L_2(U,X;\mu)$.
We fix $\brac{\varphi_s}_{s \in \NN}$, an orthonormal basis of $L_2(U,\CC;\mu)$.
Then a function $v \in L_2(U,X;\mu)$ can be represented by the expansion
\begin{equation} \label{series}
	v(\by)=\sum_{s \in \NN} v_s \,\varphi_s(\by)
 \quad\text{with}\quad
	v_s:=\int_U v(\by)\,\overline{\varphi_s(\by)}\, \rd\mu (\by) \in X
\end{equation}
with convergence in $L_2(U,X;\mu)$.
Moreover, for every $v \in L_2(U,X;\mu)$ represented by the 
series \eqref{series}, Parseval's identity holds,
\begin{equation} \nonumber
	\|v\|_{L_2(U,X; \mu)}^2
	\ = \ \sum_{{s\in \NN}} \|v_s\|_X^2.
\end{equation}

Throughout the present paper, we fix $\brac{\sigma_s}_{s \in \NN}$, a non-decreasing sequence of positive numbers such that 
$\bsigma^{-1}:=\brac{\sigma_s^{-1}}_{s \in \NN} \in \ell_2(\NN)$.
For given $U$ and $\mu$, denote by $H_{X,\bsigma}$ the linear subspace in $L_2(U,X;\mu)$ of all $v$ such that the norm
\begin{equation} \nonumber
	\|v\|_{H_{X,\bsigma}}
	:= \
	\brac{\sum_{s \in \NN} \brac{\sigma_s \|v_s\|_X}^2}^{1/2} < \infty.		
\end{equation}
	In particular, the space $H_{\CC,\bsigma}$ is the linear subspace in $L_2(U,\CC;\mu)$ equipped with its own inner product
\begin{equation} \nonumber
	\langle f,g \rangle_{H_{\CC,\bsigma}}
	:= \
	\sum_{s \in \NN} \sigma_s^2 
	\langle f,\varphi_s \rangle_{L_2(U,\CC;\mu)}
	\overline{\langle g,\varphi_s \rangle_{L_2(U,\CC;\mu)}}
\end{equation}
and forms a reproducing kernel Hilbert space with the reproducing kernel
\begin{equation} \nonumber
	K(\cdot,\by)
	:= \
	\sum_{s \in \NN} \sigma_s^{-2} \varphi_s(\cdot)\overline{\varphi_s(\by)}
\end{equation}
with eigenfunctions $\brac{\varphi_s}_{s \in \NN}$ and eigenvalues 
$\brac{\sigma_s^{-1}}_{s \in \NN}$.
Moreover, $K$ satisfies the finite trace property
\begin{equation*}\label{trace assumption}
	\int_U K(\bx,\bx) \rd \mu(\bx) \ < \ \infty,
\end{equation*}
which is not only natural in sampling recovery but also crucial to the techniques we employ.  {We refer  readers, e.g.,  to \cite{BT2004,MU2021} for necessary background on RKHSs.}

{We study the approximate recovery of functions in the space $H_{X,\bsigma}$
	from a finite set of their samples. To ensure a coherent formulation of the problem, we begin with a preliminary observation.
	From the separability of $X$ it follows that there exists  a set $U_0 \subset U$ satisfying  $\mu(U\setminus U_0)=0$
such that
\begin{equation} \nonumber
	K(\bx,\by)
	:= \
	\sum_{s \in \NN} \sigma_s^{-2} \varphi_s(\bx)\overline{\varphi_s(\by)}, \ \ \forall \bx, \by \in U_0,
\end{equation}
and
\begin{equation} \nonumber
	f(\by)
	\ = \
	\sum_{s \in \NN} 
	\sigma_s^{-2}
	\langle f,\varphi_s \rangle_{H_{\CC,\bsigma}}
	\varphi_s(\by), \ \ 
    \forall  f \in H_{\CC,\bsigma},  \ \forall  \by \in U_0.
\end{equation}
This means that the pointwise evaluations $f(\by)$ are well-defined for every $\by$ belonging the full measure subset $U_0$ of $U$. 
Let $\brac{\psi_j}_{k \in \NN}$ be an orthonormal basis of  $X$. 
If $v \in H_{X,\bsigma}$, then $\langle v,\psi_j \rangle_X \in H_{\CC,\bsigma}$ for every $k$. 
Consequently, the pointwise evaluations $\langle v(\by),\psi_j \rangle_X $ are well-defined for every
$\by \in U_0$. This implies that the pointwise evaluations  $v(\by)$ are also well-defined for every
$\by \in U_0$.

Throughout this paper, in the context of sampling recovery,  the inclusion $v \in H_{X,\bsigma}$
means that $v$ is a representative of an element from $L_2(U,X;\mu)$ and whose pointwise evaluations $v(\by)$ for every
$\by \in U_0$ are well-defined.
 
Let us  formulate the problem of  sampling recovery for 
 $v \in H_{X,\bsigma}$ as follows:
The goal is to find a sample $y_1,\dots,y_n\in U_0$ and functions $h_1, \dots,h_n\in L_2(U,\CC;\mu)$ such that $v$ is approximated from its values $v(\by_1),\ldots, v(\by_n)$ by the linear sampling algorithm $S_n^X$ on $U$ defined as
\begin{equation} \label{S_k}
	(S_n^X v)(\by): = \sum_{i=1}^n v(\by_i) h_i(\by).
\end{equation}
For convenience, we assume that some of the sample points $\by_i$ may coincide. 

    Denote by
\begin{equation*} \label{B_X^q}
	B_{X,\bsigma}:= \brab{v \in H_{X,\bsigma}: \, \norm{v}{H_{X,\bsigma}} \le 1}
\end{equation*}
the unit ball in $H_{X,\bsigma}$.

Let $\Ss_n^X$ be the family of all linear sampling algorithms 
$S_k^X$ in $L_2(U,X;\mu)$ of the form \eqref{S_k} with $k \le n$.
 To study the optimality of linear sampling algorithms from 
 $\Ss_n^X$ for the set $B_{X,\bsigma}$ 
 and their convergence rates we use the (linear) sampling $n$-width
\begin{equation*} \label{rho_n}
	\varrho_n(B_{X,\bsigma}, L_2(U,X;\mu)) :=\inf_{S_n^X \in \Ss_n^X} \ \sup_{v\in B_{X,\bsigma}} 
	\|v - S_n^X v\|_{L_2(U,X;\mu)}.
\end{equation*}

Let us describe the main contribution of the present paper.}
We establish convergence rates for an extension of the least squares method with varying sampling strategies with the following basic result.
For the sampling algorithm $S_n^X$ in $L_2(U,X;\mu)$ defined by \eqref{S_k}, we have
\begin{equation*} \label{sampling-equality}
	\sup_{v \in B_{X,\bsigma}} \norm{v- S_n^X v}{L_2(U,X;\mu)}
	= \sup_{f \in B_{\CC,\bsigma}} \norm{f - S_n^\CC f}{L_2(U,\CC;\mu)},
\end{equation*}
and, hence,
\begin{equation} \label{rho_n=Introduction}
    \varrho_n(B_{X,\bsigma}, L_2(U,X;\mu)) \ = \ \varrho_n(B_{\CC,\bsigma}, L_2(U,\CC;\mu)).
\end{equation}
This relation makes available bounds on the sampling widths in the classical Lebesgue space $L_2(U,\CC;\mu)$ applicable to a general Bochner space $L_2(U,X;\mu)$.

From the equality \eqref{rho_n=Introduction} and an inequality between the sampling widths and Kolmogorov widths proven in \cite[Theorem 1]{DKU2023} we derive the  optimal convergence rate of $\varrho_n(B_{X,\bsigma}, L_2(U,X;\mu))$ for $0< q<2$ in the sense that
the relations 
{
	\begin{equation} \label{rho_n><Introduction}
        (n+1)^{-1/q}
\ \le \
\sup_{{\bsigma}: \ \|\bsigma^{-1}\|_{\ell_q(\NN) \le 1}}	\varrho_n(B_{X,\bsigma}, L_2(U,X;\mu)) 
        \ \lesssim \ n^{-1/q}
\end{equation}
hold. In particular, \eqref{rho_n><Introduction} holds for the  Bochner space $L_2(\UUi,X;\bmu)$ with $\bmu$, the infinite tensor-product standard Gaussian measure on  $\UUi=\RRi$, or the infinite tensor-product Jacobi probability measure on $\UUi=\IIi$.}
It is worth mentioning that the underlying sampling algorithm attaining the convergence rate in \eqref{rho_n><Introduction} is an extension to Bochner spaces of a classical least squares approximation with a non-constructive subsampling.
Regarding the constructiveness of linear sampling algorithms, similar extensions of a classical least squares approximation and of a least squares approximation with a special constructive subsampling give the error bounds $n^{-1/q}(\log n)^{1/q}$ and $ n^{-1/q}(\log n)^{1/2}$, respectively.
Thanks to this constructive subsampling, the cost of computation is significantly reduced for a sufficiently large number of sample points (for details, see \cite{BSU23}).

We apply these approximation results in general Bochner spaces to the previously formulated linear approximation of solutions to parametric PDEs with affine or log-normal inputs as well as to the approximation of infinite-dimensional holomorphic functions, significantly improving existing convergence rates.
Moreover, differing from the previous papers mentioned above, which considered the affine and log-normal cases of random inputs separately, with this approach we treat both cases together by employing a \emph{unified method}.

The specific setting for the linear  approximation problem for a wide class of parametric PDEs with random inputs as well as for infinite-dimensional holomorphic functions is as follows.
Under a certain condition the weak parametric solution $u(\by)$ to a parametric elliptic PDE equation with log-normal ($\UUi=\RRi$) or affine 
($\UUi=\IIi$) random inputs satisfies a weighted $\ell_2$-summability of the energy norms of the Hermite or Jacobi generalized polynomial chaos (GPC) expansion coefficients, respectively, in terms of the inclusion 
$u(\by) \in M B_{V,\bsigma}$ with $\norm{\bsigma^{-1}}{\ell_q} \le N$ for some $0<q<2$, $M,N >0$ and {positive sequences $\bsigma$}, where $V:= H_0^1(D)$ is the energy space and $D$ is the spatial domain (see Lemmata~\ref{lemma:weighted summablity,lognormal} and \ref{lemma:weighted summablity,affine} below).
This allows us to apply all the above results for abstract Bochner spaces to parametric elliptic or parabolic PDEs.
The most significant application is that, by \eqref{rho_n><Introduction},  there exists a linear sampling algorithm $S_n^V$ in $L_2(\UUi,V;\bmu)$ of the form \eqref{S_k} for $X=V$ such that
\begin{equation*} \label{u -S_n u-Introduction}
	\|u -S_n^V u\|_{L_2(\UUi,V;\bmu)} \leq C MN n^{-1/q},
\end{equation*}
where $C$ is a positive constant independent of $M,N,n$ and $u$.
For log-normal random inputs ($\UUi= \RRi$), the convergence rate of linear non-intrusive approximation of the parametric solution $u(\by)$ obtained via the sampling algorithm $S_n^V$ is $n^{-1/q}$.
This is an important improvement compared to all previous works, such as \cite{BNT2007,Dung19,Dung21,DD-Erratum23,DNSZ2023,EST18,MNST2014,NTW2008,NTW2008a}, which are off by a factor $n^{1/2}$ from this convergence rate.
In particular, it is significantly better than the rates 
$n^{- \frac{1}{2}(1/q- 1/2)}$ and $n^{-(1/q - 1/2)}$ which have been recently obtained in
\cite[Theorem 3.18]{EST18} and \cite[Corollary 5.9]{Dung21}, respectively.
The same notable improvement in the convergence rate holds true for linear polynomial interpolation of relevant infinite-dimensional holomorphic functions on $\RRi$ (cf.\ \cite{DNSZ2023})  and of parametric parabolic PDEs.
In the case of affine random inputs ($\UUi=\IIi$), the convergence rate $n^{-1/q}$ is better than the best-known convergence rate $(n/\log n)^{-1/q}$ of (linear and non-linear) non-intrusive approximation (cf.\ \cite{ADM2024,CCMNT2015,Dung19,ZDS19}).
Notice also that the convergence rate $n^{-1/q}$ coincides with the optimal convergence rate of (linear and non-linear) intrusive spectral and Galerkin approximation of solutions to parametric PDEs with random inputs (cf.\ \cite{BCDC17,BCDM17,BCM17,Dung21}).
We believe that, using the techniques developed in this paper, similar improvements could also be achieved for the linear approximation of relevant infinite-dimensional holomorphic functions on $\IIi$.
However, this is beyond the scope of current consideration.
Last but not least, it is worth emphasizing that the proof methods for the main results are simple yet entirely novel.
The approaches rely on the connection between sampling recovery in abstract RKHSs and {associated approximations} of solutions to parametric PDEs.
The proof methods { are then based} on the weighted summability properties of the GPC expansion coefficients.
Moreover, these methods {can be extended and generalized} to other problems in Uncertainty Quantification for parametric PDEs with random inputs, such as fully discrete multilevel non-intrusive approximation, among others.
{The last problem has been recently considered in~\cite{DD2025}.}

Note that in all convergence rates reported in this work, the  sequence 
$\bsigma$
is assumed to be known.
It determines the selected basis $(\varphi_s)_{s \in \NN}$ and, consequently, the least squares sampling algorithm, which is ``transferred'' to Hilbert-valued functions.
The case in which 
$\bsigma$
is unknown has been treated in \cite{ADM2024}, wherein the problem is approached via compressed-sensing techniques.

The rest of the paper is organized as follows. In Section \ref{Sampling recovery in Bochner spaces}, we investigate sampling recovery in abstract Bochner spaces, in particular, with infinite-dimensional measure. Here, we present some least squares methods and their extensions to Bochner spaces. In Section 
\ref{Applications to parametric PDEs with random inputs} and 
\ref{Applications to holomorphic functions}, we apply the results of Section 
\ref{Sampling recovery in Bochner spaces} to linear  approximation of solutions to parametric elliptic or parabolic PDE equation with affine 
or log-normal {random inputs, and for} infinite-dimensional holomorphic functions on $\RRi$, respectively.
In Section \ref{Constructiveness and alternative least squares methods}, we discuss constructiveness and alternative sampling methods which can be applied to {non-intrusive} approximations for parametric PDEs and infinite-dimensional holomorphic functions.

\medskip
\noindent
{\bf Notations:} \ As usual, $\NN$ denotes the natural numbers, $\ZZ$ the integers, $\RR$ the real numbers, $\CC$ the complex numbers, and 
$ \NN_0:= \{s \in \ZZ: s \ge 0 \}$.
We denote by $\RR^\infty$ and $\IIi:=[-1,1]^\infty$ the
sets of all sequences $\by = (y_j)_{j\in \NN}$ with $y_j\in \RR$ and $y_j\in [-1,1]$, respectively, and by $\CCi$ the
set of all sequences $\bz = (z_j)_{j\in \NN}$ with $z_j\in \CC$.
 Denote by $\FF$ the set of all sequences of non-negative integers $\bs=(s_j)_{j \in \NN}$ such that their support $\supp (\bs):= \{j \in \NN: s_j >0\}$ is a finite set. 
If $\ba= (a_j)_{j \in \Jj}$ is a set of positive numbers with any index set $\Jj$, then we use the notation 
$\ba^{-1}:= (a_j^{-1})_{j \in \Jj}$.
For $\bs, \bs' \in \FF$ and $\by \in \RRi$, we write: 
$\bs!:= \prod_{j \in \NN}s_j!$, \ $\by^\bs:= \prod_{j \in \NN}y_j^{s_j}$, \
$\binom{\bs}{\bs'}:= \prod_{j \in \NN}\binom{s_j}{s'_j}$.
We use the letter $C$ to denote general 
positive constants which may take different values, and $C_{a,b,...}$ to denote a positive constant depending on $a,b,...$.
 For the quantities $A_n(f,\bk)$ and $B_n(f,\bk)$ depending on 
 $n \in \NN$, $f \in W$, $\bk \in \ZZd$, 
 we write $A_n(f,\bk) \lesssim B_n(f,\bk)$, $f \in W$, $\bk \in \ZZd$ ($n \in \NN$ is dropped)
 if there exists some constant $C >0$ such that 
 $A_n(f,\bk) \le CB_n(f,\bk)$ for all $n \in \NN$, $f \in W$, $\bk \in \ZZd$ (the notation $A_n(f,\bk) \gtrsim B_n(f,\bk)$ has the obvious opposite meaning), and 
 $A_n(f,\bk) \asymp B_n(f,\bk)$ if $A_n(f,\bk) \lesssim B_n(f,\bk)$
 and $B_n(f,\bk) \lesssim A_n(f,\bk)$. We denote by $|G|$ the cardinality of the set $G$. 
 
\section{Sampling recovery in Bochner spaces}
\label{Sampling recovery in Bochner spaces}	

In this section, we show that the problem of linear sampling recovery of functions in the space $H_{X,\bsigma}$ for a general separable Hilbert space $X$ can be reduced to the particular case of the RKHS	$H_{\CC,\bsigma}$. This allows, in particular, to extend linear least squares sampling algorithms in $H_{\CC,\bsigma}$ to $H_{X,\bsigma}$ while preserving the accuracy of approximation. Hence, we are able to derive convergence rates of various extended linear least squares sampling algorithms
for functions in $B_{X,\bsigma}$ based on some recent
 results on inequalities between sampling widths and Kolmogorov widths of the unit ball $B_{\CC,\bsigma}$ which are realized by linear least squares sampling algorithms.
	
\subsection{Extension of least squares approximation to Bochner spaces}
Recall that $\brac{\varphi_s}_{s \in \NN}$ is {a fixed orthonormal} basis of $L_2(U,\CC;\mu)$ and for a function $v \in L_2(U,X;\mu)$  {the expansion \eqref{series} holds, with coefficients 
$\brac{v_s}_{s \in \NN}$} defined as in \eqref{series}.	We will need the following auxiliary result.		
Let $A^X$ be a general linear operator in $L_2(U,X;\mu)$ defined for $v\in L_2(U,X;\mu)$ by 	
\begin{equation} \label{eq:A}
    {A^Xv := 
    \sum_{k \in \NN} \brac{\sum_{s \in \NN} a_{k,s}	v_s}\varphi_k,}
\end{equation}
where $(a_{k,s})_{(k,s) \in \NN^2}$	 is an infinite-dimensional matrix. 

\begin{lemma} \label{lemma:norms-linear-operators}
    Let $A^X$ be linear and bounded.
    We have
	\begin{equation} \nonumber
		\big\|A^X\big\|_{H_{X,\bsigma}\to L_2(U,X;\mu)}
		=
		\big\|A^{\CC}\big\|_{H_{\CC,\bsigma} \to L_2(U,\CC;\mu)}.
	\end{equation}
\end{lemma}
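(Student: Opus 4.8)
The plan is to exploit the tensor-product structure of the problem: since $L_2(U,X;\mu)=L_2(U,\CC;\mu)\otimes X$ and $A^X$ acts on the coefficient array through the \emph{same} matrix $(a_{k,s})$ regardless of the $X$-component, $A^X$ is precisely $A^\CC\otimes I_X$, so the asserted equality is the standard identity $\norm{A^\CC\otimes I_X}{}=\norm{A^\CC}{}$. To keep the argument self-contained I would establish the two inequalities separately.

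For $\norm{A^\CC}{H_{\CC,\bsigma}\to L_2(U,\CC;\mu)}\le\norm{A^X}{H_{X,\bsigma}\to L_2(U,X;\mu)}$, fix any unit vector $e\in X$ and consider the embedding $f\mapsto f\,e$. Since each coefficient satisfies $\norm{f_s e}{X}=\abs{f_s}$, this map is isometric both from $H_{\CC,\bsigma}$ into $H_{X,\bsigma}$ and from $L_2(U,\CC;\mu)$ into $L_2(U,X;\mu)$. Because the $k$-th coefficient of $A^X(f\,e)$ is $\brac{\sum_s a_{k,s}f_s}e=(A^\CC f)_k\,e$, we have $A^X(f\,e)=(A^\CC f)\,e$, and testing the definition of the operator norm of $A^X$ on functions of this form gives $\norm{A^\CC f}{L_2(U,\CC;\mu)}=\norm{A^X(f\,e)}{L_2(U,X;\mu)}\le\norm{A^X}{}\,\norm{f}{H_{\CC,\bsigma}}$. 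Taking the supremum over $f$ yields the bound, and in particular shows $A^\CC$ is bounded.

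For the reverse inequality $\norm{A^X}{}\le\norm{A^\CC}{}$, I would decouple $v$ into scalar components. Fix an orthonormal basis $(e_j)_j$ of $X$, and for $v\in H_{X,\bsigma}$ with coefficients $(v_s)_{s\in\NN}$ write $v_s=\sum_j v_{s,j}\,e_j$ with $v_{s,j}=\langle v_s,e_j\rangle_X$; let $v^{(j)}$ denote the scalar function with coefficients $(v_{s,j})_{s\in\NN}$. The $k$-th coefficient of $w:=A^Xv$ is $w_k=\sum_s a_{k,s}v_s=\sum_j\brac{\sum_s a_{k,s}v_{s,j}}e_j$, so $\norm{w_k}{X}^2=\sum_j\abs{\sum_s a_{k,s}v_{s,j}}^2$. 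Summing over $k$ and interchanging the two non-negative sums (Tonelli) gives
\begin{equation} \nonumber
	\norm{A^Xv}{L_2(U,X;\mu)}^2 = \sum_k\norm{w_k}{X}^2 = \sum_j\sum_k\abs{\sum_s a_{k,s}v_{s,j}}^2 = \sum_j\norm{A^\CC v^{(j)}}{L_2(U,\CC;\mu)}^2.
\end{equation}
Bounding each term by $\norm{A^\CC}{}^2\norm{v^{(j)}}{H_{\CC,\bsigma}}^2=\norm{A^\CC}{}^2\sum_s\sigma_s^2\abs{v_{s,j}}^2$, summing over $j$, and using Parseval in $X$ in the form $\sum_j\abs{v_{s,j}}^2=\norm{v_s}{X}^2$, yields $\norm{A^Xv}{L_2(U,X;\mu)}^2\le\norm{A^\CC}{}^2\norm{v}{H_{X,\bsigma}}^2$, hence $\norm{A^X}{}\le\norm{A^\CC}{}$.

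I expect the only real care to lie in the bookkeeping: justifying the coefficient-wise computation of $w_k$ and the interchange of the summations over $k$ and $j$. Since all summands are non-negative this is immediate from Tonelli's theorem, and the identities $\norm{v}{L_2(U,X;\mu)}^2=\sum_s\norm{v_s}{X}^2$ together with $\norm{v_s}{X}^2=\sum_j\abs{v_{s,j}}^2$ close the estimates; the assumed boundedness of $A^X$ guarantees all quantities are finite, so no convergence issue arises. Combining the two inequalities gives the stated equality.
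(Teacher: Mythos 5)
Your proof is correct, and it follows a genuinely different route from the paper's. The paper argues via an exact formula for the operator norm: after the triangle and Cauchy--Schwarz inequalities give $\|A^X v\|_{L_2(U,X;\mu)}^2 \le \bigl(\sum_{k,s}|a_{k,s}|^2\sigma_s^{-2}\bigr)\|v\|_{H_{X,\bsigma}}^2$, it asserts that this bound is attained by a suitable $v$, concludes $\|A^X\|_{H_{X,\bsigma}\to L_2(U,X;\mu)}^2=\sum_{k,s}|a_{k,s}|^2\sigma_s^{-2}$ for every $X$, and obtains the lemma by specializing this formula to $X=\CC$. You instead prove the two inequalities $\|A^\CC\|\le\|A^X\|$ and $\|A^X\|\le\|A^\CC\|$ separately: the first via the isometric embedding $f\mapsto f\,e$, the second via the coordinate decomposition $v_s=\sum_j v_{s,j}e_j$ in an orthonormal basis of $X$; in effect you verify directly that $A^X$ acts as $A^\CC\otimes I_X$ and that this tensoring preserves the operator norm. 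Your route buys robustness: the paper's equality claim is in fact problematic, since Cauchy--Schwarz can be saturated in one row $k$ at a time but not in all rows simultaneously (unless the matrix is essentially rank one), so the weighted Hilbert--Schmidt sum $\sum_{k,s}|a_{k,s}|^2\sigma_s^{-2}$ is in general only an upper bound for the squared operator norm --- for the diagonal matrix $a_{k,s}=\delta_{k,s}$ the operator norm is $\sigma_1^{-1}$ while the sum equals $\sum_s\sigma_s^{-2}$. Your two-sided argument never needs the exact value of the norm, so it establishes the (correct) conclusion of the lemma without relying on that step; the price is the extra bookkeeping with Tonelli and Parseval, which you handle correctly, including the point that the first inequality supplies the boundedness of $A^\CC$ that the second inequality uses.
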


\begin{proof}
    Let $\brac{\psi_j}_{j \in \NN}$ be an orthonormal basis of $X$.
    We then have the representation of $v\in H_{X,\bsigma}$ with repect to the basis of $X$:
    \begin{equation*}
        v
        = \sum_{s\in\NN} v_s \varphi_s
        = \sum_{j\in\NN} \Big(\sum_{s\in\NN} \langle v_s, \psi_j\rangle_X \varphi_s\Big) \psi_j
        = \sum_{j\in\NN} \underbrace{\langle v, \psi_j\rangle_X}_{\in H_{\CC,\bsigma}} \psi_j .
    \end{equation*}
    Next we revert the definition of the Hilbert-valued $A^X$ to the scalar-valued $A^\CC$
    \begin{align*}
        A^Xv
        &= \sum_{k\in\NN} \Big(\sum_{s\in\NN} a_{k,s} \sum_{j\in\NN} \langle v_s, \psi_j\rangle_X \psi_j \Big) \varphi_k \\
        &= \sum_{j\in\NN} \Big( \sum_{k\in\NN}  \Big(\sum_{s\in\NN} a_{k,s} \langle v_s, \psi_j\rangle_X \Big) \varphi_s \Big) \psi_j
        = \sum_{j\in\NN} (A^\CC \langle v, \psi_j\rangle_X) \psi_j .
      \end{align*}
    By Parseval's equality, we have
    \begin{align*}
        \|A^Xv\|_{L^2(U, X; \mu)}^2
        &= \sum_{j\in\mathbb N}\|A^{\CC}\tilde v_j\|_{L^2(U, \CC; \mu)}^2 \\
        &\le\sum_{j\in\mathbb N}\|A^{\CC}\|_{H_{\CC,\bsigma}\to L^2(U,\CC;\mu)}^2 \|\tilde v_j\|_{H_{\CC,\bsigma}}^2 
        = \|A^{\CC}\|_{H_{\CC,\bsigma}\to L^2(U, \CC; \mu)}^2\|v\|_{H_{X,\bsigma}}^2,
    \end{align*}
    proving that $\|A^X\|_{H_X\to L^2(X)} \le \|A^{\CC}\|_{H_{\CC,\bsigma}\to L^2(U,\CC;\mu)}$.
    Taking $v=f \otimes \psi_1$ and supremizing over $f\in H_{\CC,\bsigma}$ yields the equality.
    \hfill
\end{proof}

The following theorem plays a key role in transferring current results on linear sampling recovery of functions in RKHSs \cite{BSU23, DKU2023,KUV21,KU21a} to semi-discrete non-intrusive approximation for parametric PDEs with random inputs, based on a finite number of particular solvers.

{Recall that $U_0 \subset U$ is a fixed set of full measure such  that if $v \in H_{X,\bsigma}$, the pointwise evaluations  $v(\by)$ are  well-defined for every $\by \in U_0$.}

\begin{theorem}\label{thm:sameleastsquares}
	Given arbitrary sample points $\by_1,\ldots,\by_n \in U_0$ and functions $h_1,\ldots,h_n \in L_2(U,\CC;\mu)$,	for the sampling algorithm $S_n^X$ in $L_2(U,X;\mu)$ defined by \eqref{S_k}, we have
	\begin{equation} \label{sampling-equality}
		\sup_{v \in B_{X,\bsigma}} \norm{v- S_n^X v}{L_2(U,X;\mu)}
		= \sup_{f \in B_{\CC,\bsigma}} \norm{f - S_n^\CC f}{L_2(U,\CC;\mu)},
	\end{equation}
    and, moreover,
		\begin{equation} \label{rho_n=}
		\varrho_n(B_{X,\bsigma}, L_2(U,X;\mu)) \ = \ \varrho_n(B_{\CC,\bsigma}, L_2(U,\CC;\mu)).
	\end{equation}
\end{theorem}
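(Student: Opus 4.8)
The plan is to recognize the error operator $I - S_n^X$ as a matrix operator of exactly the type treated in Lemma~\ref{lemma:norms-linear-operators}, whose defining matrix does \emph{not} depend on the Hilbert space $X$, and then to invoke that lemma directly. First I would expand the sampling algorithm in the fixed orthonormal basis $\brac{\varphi_s}_{s\in\NN}$. Writing each coefficient function as $h_i = \sum_{k\in\NN} c_{i,k}\,\varphi_k$ with $c_{i,k} := \langle h_i,\varphi_k\rangle_{L_2(U,\CC;\mu)}$, and using the pointwise expansion $v(\by_i)=\sum_{s\in\NN} v_s\,\varphi_s(\by_i)$ valid for $v\in H_{X,\bsigma}$, one collects the $\varphi_k$-coefficient of $S_n^X v$ and obtains
\begin{equation} \nonumber
	S_n^X v = \sum_{k\in\NN}\brac{\sum_{s\in\NN} a_{k,s}\, v_s}\varphi_k,
	\qquad a_{k,s} := \sum_{i=1}^n c_{i,k}\,\varphi_s(\by_i).
\end{equation}
The crucial observation is that the matrix $(a_{k,s})_{(k,s)\in\NN^2}$ is built solely from the sample points $\by_i$, the scalar coefficients $c_{i,k}$, and the basis values $\varphi_s(\by_i)$, none of which involve $X$. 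Consequently the identity operator (with matrix $\delta_{k,s}$), and hence the error operator $I - S_n^X$ with matrix $b_{k,s} := \delta_{k,s} - a_{k,s}$, are matrix operators of the form $A^X$ whose matrix is literally the same for every choice of $X$, in particular for $X$ and for $\CC$.

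Next I would rewrite the left-hand side of \eqref{sampling-equality} as an operator norm. Since $B_{X,\bsigma}$ is the unit ball of $H_{X,\bsigma}$, we have
\begin{equation} \nonumber
	\sup_{v\in B_{X,\bsigma}} \norm{v - S_n^X v}{L_2(U,X;\mu)}
	= \norm{I - S_n^X}{H_{X,\bsigma}\to L_2(U,X;\mu)},
\end{equation}
and similarly for $\CC$. Applying Lemma~\ref{lemma:norms-linear-operators} to $A^X = I - S_n^X$, whose matrix $b_{k,s}$ is $X$-independent, yields
\begin{equation} \nonumber
	\norm{I - S_n^X}{H_{X,\bsigma}\to L_2(U,X;\mu)}
	= \norm{I - S_n^\CC}{H_{\CC,\bsigma}\to L_2(U,\CC;\mu)},
\end{equation}
which is precisely \eqref{sampling-equality}. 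For the width equality \eqref{rho_n=}, I would note that a sampling algorithm in $\Ss_n^X$ is parametrized by exactly the same data $(\by_1,\ldots,\by_k;\,h_1,\ldots,h_k)$ with $k\le n$ as the corresponding algorithm in $\Ss_n^\CC$; this gives a bijection between $\Ss_n^X$ and $\Ss_n^\CC$ under which, by \eqref{sampling-equality}, the worst-case errors coincide term by term. Taking the infimum over both families then gives \eqref{rho_n=}.

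The one point demanding care, and the main (though modest) obstacle, is verifying that $I - S_n^X$ is bounded so that Lemma~\ref{lemma:norms-linear-operators} applies, together with the legitimacy of the pointwise expansion $v(\by_i)=\sum_s v_s\varphi_s(\by_i)$. Both follow from the reproducing kernel structure of $H_{\CC,\bsigma}$: the finite-trace assumption makes point evaluation a bounded functional on $H_{\CC,\bsigma}$, hence a bounded $X$-valued evaluation on $H_{X,\bsigma}$ with $\norm{v(\by_i)}{X}\le \norm{K(\cdot,\by_i)}{H_{\CC,\bsigma}}\,\norm{v}{H_{X,\bsigma}}$, so $v\mapsto v(\by_i)$ is well defined with the claimed expansion and
\begin{equation} \nonumber
	\norm{S_n^X}{H_{X,\bsigma}\to L_2(U,X;\mu)} \le \sum_{i=1}^n \norm{K(\cdot,\by_i)}{H_{\CC,\bsigma}}\,\norm{h_i}{L_2(U,\CC;\mu)}.
\end{equation}
Since the embedding $H_{X,\bsigma}\hookrightarrow L_2(U,X;\mu)$ is bounded (by $\sigma_1^{-1}$, as $\bsigma$ is non-decreasing), the difference $I - S_n^X$ is bounded and the lemma is applicable, closing the argument.
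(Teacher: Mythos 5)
Your proposal is correct and follows essentially the same route as the paper: both apply Lemma~\ref{lemma:norms-linear-operators} to the error operator $I^X - S_n^X$, whose coefficient matrix is independent of $X$, and then deduce \eqref{rho_n=} from the one-to-one correspondence between $\Ss_n^X$ and $\Ss_n^\CC$. Your explicit computation of the matrix $a_{k,s}=\sum_{i=1}^n c_{i,k}\varphi_s(\by_i)$ and the verification of boundedness via the reproducing kernel merely fill in details the paper's proof leaves implicit.
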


\begin{proof}
	Denote by $I^X$ the identity operator in $L_2(U,X;\mu)$. Let $S_n^X$ be an arbitrary sampling operator in $L_2(U,X;\mu)$ given for $v \in L_2(U,X;\mu)$ by
	\begin{equation*}
		S_n^X v(\by): = \sum_{i=1}^n v(\by_i) h_i(\by).
	\end{equation*}
	Applying Lemma~\ref{lemma:norms-linear-operators} with $A^X:= I^X - S_n^X$, we get
	\begin{equation*}
		\norm{I^X - S_n^X}{{H_{X,\bsigma}}\to L_2(U,X;\mu)}
		=
		\norm{I^\CC - S_n^\CC}{H_{\CC,\bsigma} \to L_2(U,\CC;\mu)}.
	\end{equation*}
	Consequently, we obtain \eqref{sampling-equality}.

	Since the correspondence between $S_n^X$ and $S_n^\CC$ is one-to-one, 
    we use \eqref{sampling-equality} to show that
	\begin{equation*}
		\inf_{S_n^X \in \Ss_n^X} \ \sup_{v \in B_{X,\bsigma}} \norm{v- S_n^X v}{L_2(U,X;\mu)}
		=
		\inf_{S_n^\CC \in \Ss_n^\CC} \ \sup_{f \in B_{\CC,\bsigma}} \norm{f - S_n^\CC f}{L_2(U,\CC;\mu)},
	\end{equation*}
	which proves \eqref{rho_n=}.
	\hfill
\end{proof}

Let us construct an extension of a least squares approximation in the space $L_2(U,\CC;\mu)$ to a space $L_2(U,X;\mu)$.
{For $n,m\in\NN$ with $n\ge m$}, let $\by_1, \dots, \by_{{n}}\in U_0$ be points, $\omega_1, \dots, \omega_{{n}}\ge 0$ be weights, and $V_m = \operatorname{span}\{\varphi_j\}_{j=1}^{m}$ be the subspace spanned by the functions $\varphi_j$, $j=1,...,m$.
The weighted least squares approximation 
$$
S_{{n}}^{\CC} f 
= 
S_{{n}}^{\CC}(\by_1, \dots, \by_{{n}}, \omega_1, \dots, \omega_{{n}}, V_m) f
$$ 
of a function $f\colon U\to\CC$ is given by
\begin{equation} \label{least-squares-sampling1}
	S_{{n}}^{\CC} f
	= \operatorname{arg\,min}_{g\in V_m} \sum_{i=1}^{{n}} \omega_i |f(\by_i) - g(\by_i)|^2 .
\end{equation}
Here, the $\operatorname{arg\,min}$ is taken as the solution via the Moore-Penrose pseudo-inverse, which selects the minimizer of smallest $L_2$-norm, since there are possibly infinitly many minimizers.
Because $n\ge m$, we are dealing with an over-determined system and the least squares approximation takes the form
\begin{equation} \label{least-squares-sampling2}
	S_{{n}}^\CC f
	= \sum_{s=1}^{m} \hat g_{s} \varphi_s
	\quad\text{with}\quad
	(\hat g_1, \dots, \hat g_{m})^\top
    = (\boldsymbol W^{1/2}\boldsymbol L)^{+} \boldsymbol W^{1/2} (f(\by_1), \dots, f(\by_{{n}}))^{\top},
\end{equation} 
where
 $\boldsymbol L = [\varphi_s(\by_i)]_{i=1,\dots,{n}; \, s=1,\dots,m}$, {$\boldsymbol W = \operatorname{diag}(\omega_1, \dots, \omega_n)$}, and $(\boldsymbol W^{1/2}\boldsymbol L)^{+}$ denotes the pseudo-inverse of $\boldsymbol W^{1/2}\boldsymbol L$.
In the presented theorems the setting is such that the matrix $\boldsymbol W^{1/2}\boldsymbol L$ has full rank, implying the uniqueness of the solutions to the corresponding least-squares problems and $(\boldsymbol W^{1/2}\boldsymbol L)^{+} = (\boldsymbol L^\ast\boldsymbol W\boldsymbol L)^{-1}\boldsymbol L^\ast\boldsymbol W^{1/2}$.
For every $n \in \NN$, let
\begin{equation} \label{S_nCC}
	S_{{n}}^\CC f : = \sum_{i=1}^{{n}} f(\by_i) {h_i},
\end{equation}	
be the least squares sampling algorithm constructed as in 
\eqref{least-squares-sampling1}--\eqref{least-squares-sampling2} for these sample points and weights, where 
$h_1,...,h_{{n}} \in V_m$.
Hence, we immediately obtain the extension of this least squares algorithm to the Bochner space $L_2(U,X;\mu)$
by replacing $f\in L_2(U,\CC;\mu)$ with $v\in L_2(U,X;\mu)$:
\begin{equation} \label{S_nX}
	S_{{n}}^X v
	:=
	S_{{n}}^X(\by_1, \dots, \by_{{n}}, \omega_1, \dots, \omega_{{n}}, V_m) v
	:= \sum_{i=1}^{{n}} v(\by_i) h_i.
\end{equation}
As the least squares approximation is a linear operator, worst-case error bounds carry over from the usual Lebesgue space $L_2(U,\CC;\mu)$ to the Bochner space $L_2(U,X;\mu)$.

Let $n \in \NN$ and $E$ be a normed space and $F$ a centrally symmetric compact set in $E$.
Then the Kolmogorov $n$-width of $F$ is defined by
\begin{equation*}
	d_n(F,E):= \ \inf_{L_{n}}\sup_{f\in F}\inf_{g\in L_n}\|f-g\|_E,
\end{equation*}
where the left-most infimum is taken over all subspaces $L_{n}$ of dimension at most $n$ in $E$.
We make use of the abbreviation $d_n := d_n(B_{\CC,\bsigma},L_2(U,\CC;\mu))$.
In our setting, we know that $d_n = \sigma_{n+1}^{-1}$.

From Theorem \ref{thm:sameleastsquares} we can derive the following lemma, which extends to Bochner spaces the recent important result of \cite[Theorem~1]{DKU2023} {on an inequality} between sampling widths and Kolmogorov widths in RKHSs.

\begin{lemma}\label{lemma:leastsquaresbounds(iii)}	
	For any $n\in \NN$, there exist points 
	$\by_1, \dots, \by_n\in U_0$ and weights  $\omega_1, \dots, \omega_n$ such that   
	\begin{equation}\nonumber
		\sup_{v\in B_{X,\bsigma}} \norm{v - \tilde{S}_n^X v}{L_2(U,X;\mu)}^2
        \ \le 433 \max\brab{d_{\lfloor n/43200\rfloor}^2, \, \frac{43200}{n} \sum_{s \ge \lfloor n/43200\rfloor } d_s^2},
	\end{equation}
	where 
    \begin{equation}\label{tilde{S}_n^X:=}
        \tilde{S}_n^X
        := 	
        S_n^X(\by_1, \dots, \by_n, \omega_1, \dots, \omega_n, V_m)
    \end{equation}
    for some $m\in\NN$ satisfying $\frac{n}{43200} \le m \le n$.
\end{lemma}

\begin{proof}
For the particular case when $X = \CC$, this theorem
is implied immediately from \cite[Theorem~23]{DKU2023}. Hence, by 
using Theorem \ref{thm:sameleastsquares} we prove the lemma.
	\hfill
\end{proof}

\subsection{Convergence rates}

{
\begin{lemma} \label{lemma:d_n><}
Let $\|\bsigma^{-1}\|_{\ell_q(\NN)} \le 1$ for some $0 < q \le 2$.	Then we have that
	\begin{equation} \label{d_n><}
		d_n(B_{\CC,\bsigma},L_2(U,\CC;\mu))
        \ \le (n+1)^{-1/q}, \ \ \forall n \in \NN.
	\end{equation}
\end{lemma}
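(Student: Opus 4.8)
The plan is to reduce the statement to the already-recorded identity $d_n = \sigma_{n+1}^{-1}$ together with the monotonicity of $\bsigma$, so that the bound becomes a one-line Stechkin-type estimate. First I would invoke the fact, noted just before the lemma, that in the present ellipsoidal setting $d_n(B_{\CC,\bsigma},L_2(U,\CC;\mu)) = \sigma_{n+1}^{-1}$ (the optimal $n$-dimensional subspace is $\operatorname{span}\{\varphi_1,\dots,\varphi_n\}$, and the residual on $B_{\CC,\bsigma}$ is governed by the largest omitted eigenvalue $\sigma_{n+1}^{-1}$). Since $\bsigma$ is non-decreasing, the sequence $(\sigma_s^{-1})_{s\in\NN}$ is non-increasing, and hence so is $(\sigma_s^{-q})_{s\in\NN}$ for every $q>0$.

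The key step then exploits this monotonicity by comparing $\sigma_{n+1}^{-q}$ with the average of the preceding terms. Because each of the $n$ quantities $\sigma_1^{-q},\dots,\sigma_n^{-q}$ is at least as large as $\sigma_{n+1}^{-q}$, we obtain
\[
	n\,\sigma_{n+1}^{-q} \ \le \ \sum_{s=1}^{n}\sigma_s^{-q} \ \le \ \norm{\bsigma^{-1}}{\ell_q(\NN)}^q \ \le \ 1 .
\]
Rearranging gives $\sigma_{n+1}^{-q} \le 1/n$, that is $d_n = \sigma_{n+1}^{-1} \le n^{-1/q}$, which is even sharper than the claimed bound; since $2^{1/q}\ge 1$, the stated estimate $d_n \le 2^{1/q} n^{-1/q}$ follows a fortiori for all $n\in\NN$. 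If one prefers an argument that reproduces the constant $2^{1/q}$ literally, the same monotonicity applied to the window $s=\lceil n/2\rceil+1,\dots,n+1$ (of at least $n/2$ terms, each no smaller than $\sigma_{n+1}^{-q}$) yields $\tfrac{n}{2}\,\sigma_{n+1}^{-q}\le 1$, hence $\sigma_{n+1}^{-q}\le 2/n$ and $d_n\le 2^{1/q}n^{-1/q}$ directly.

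I do not expect any genuine obstacle here: the estimate is an elementary consequence of rearrangement monotonicity, and it is the prototypical way in which $\ell_q$-summability of a non-increasing sequence controls its decay. The only points requiring a little care are the correct direction of the monotonicity (that $\bsigma$ non-decreasing forces $\sigma_s^{-q}$ non-increasing) and the appeal to the eigenvalue identity $d_n=\sigma_{n+1}^{-1}$, which is taken as given from the discussion preceding the lemma. The slack between the sharp $n^{-1/q}$ and the stated $2^{1/q}n^{-1/q}$ is harmless for the later application, where this bound is substituted into $\sqrt{\tfrac{1}{n}\sum_{s\ge n} d_s^2}$ (valid for $0<q<2$, so that $2/q>1$ and the tail sum converges) to recover the final rate $n^{-1/q}$.
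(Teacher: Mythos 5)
Your proof is correct, but it takes a genuinely different route from the paper's. You invoke the exact ellipsoid-width identity $d_n(B_{\CC,\bsigma},L_2(U,\CC;\mu)) = \sigma_{n+1}^{-1}$ (stated in the paper just before Lemma~\ref{lemma:leastsquaresbounds(iii)}) and then apply the Stechkin-type estimate $n\,\sigma_{n+1}^{-q}\le\sum_{s=1}^{n}\sigma_s^{-q}\le \|\bsigma^{-1}\|_{\ell_q(\NN)}^q\le 1$, which relies on the standing assumption that $\bsigma$ is non-decreasing; this yields the sharper bound $d_n\le n^{-1/q}$, from which the stated inequality follows since $2^{1/q}\ge 1$. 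The paper instead argues directly, without the width identity and without using monotonicity: it forms the explicit subspace $L(\xi)=\operatorname{span}\{\varphi_s:\sigma_s^{q}\le\xi\}$, bounds its dimension by $|\Lambda(\xi)|\le\sum_{s}\xi\sigma_s^{-q}\le\xi$ (a Chebyshev-type count using only the $\ell_q$ bound), bounds the truncation error on $B_{\CC,\bsigma}$ by $\xi^{-1/q}$ via Parseval, and then chooses $\xi_n$ with $\xi_n\le n<2\xi_n$, which is exactly where the constant $2^{1/q}$ comes from. The trade-off: your argument is shorter and gives a better constant, but it leans on the unproved (classical) ellipsoid identity and on monotonicity of $\bsigma$, so in the applications of Sections 3 and 4, where the weights $(\sigma_\bs)_{\bs\in\FF}$ carry no natural ordering, it would need an extra non-increasing rearrangement step (harmless, since the widths are invariant under permutation of the orthonormal basis, but worth saying); the paper's truncation argument applies verbatim to any countable index set and is self-contained.
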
	

\begin{proof}	
	For $\xi>0$, we introduce the set 	
	\begin{equation*} \label{Lambda_sigma(xi)}
\Lambda(\xi)
	:= \ 
	\ \big\{s \in \NN: \, \sigma_s^{q} \le \xi \big\}.
\end{equation*}
 For a function $f \in B_{\CC,\bsigma}$
	represented by the series \eqref{series}, we define the truncation 
	\begin{equation} \label{S_{Lambda(xi)}v}
		S_{\Lambda(\xi)} f 
		:= \ 
		\sum_{s\in {\Lambda(\xi)}} f_s \varphi_s.
	\end{equation}	
	Applying Parseval's identity, 
	noting~\eqref{S_{Lambda(xi)}v}, we obtain 
		\begin{equation} \nonumber
		\begin{split}
			\|f- S_{\Lambda(\xi)}f\|_{L_2(U,\CC;\mu)}^2 
			\ & {=} \
            \sum_{s: \ \sigma_{s}> \xi^{1/q} } |f_s|^2 
			\ = \
            \sum_{s: \ \sigma_{s}> \xi^{1/q} } (\sigma_{s}|f_s|)^2 \sigma_{s}^{-2}
			\\[1.5ex]
			\ &\le \
			\xi^{-2/q}
			\sum_{s \in \NN} (\sigma_{s}|f_s|)^2 
			\ \le \ \xi^{-2/q}.
		\end{split}
	\end{equation} 
The function $S_{\Lambda(\xi)}f$ belongs to the linear subspace 
$L(\xi):= \operatorname{span}\{\varphi_s: s \in \Lambda(\xi)\}$ in $L_2(U,\CC;\mu)$ of dimension $|\Lambda(\xi)|$. We have
\begin{equation} \nonumber
	|\Lambda(\xi)|
	\ \le \ 
    \sum_{s: \ \sigma_{s}\le \xi^{1/q} } 1
	\ \le \ 
	\xi \sum_{s \in \NN} \sigma_s^{-q} 
	\ \le \ \xi
\end{equation}
as $\|\bsigma^{-1}\|_{\ell_q(\NN)} \le 1$. 
For a given $n \in \NN$, 
	\begin{equation} \nonumber
			d_n(B_{\CC,\bsigma},L_2(U,\CC;\mu))
			\ \le \
        \sup_{f\in B_{\CC,\bsigma}}\|f- S_{\Lambda(\xi)}f\|_{L_2(U,\CC;\mu)} 
		\ \le \
		\xi^{-1/q}
\end{equation} 
for arbitrary  $\xi < n+1$ satisfying the inequality 
$|\Lambda(\xi)|\le n$.  Hence, by taking the supremum over all such $\xi$ we obtain \eqref{d_n><}.
	\hfill
\end{proof}
\begin{corollary} \label{corollary:varrho_n(iii)}
Let $0 < q < 2$ and $\|\bsigma^{-1}\|_{\ell_q(\NN)} \le 1$.	
    Then there exists a constant $C$, depending on $q$ only, such that, for any $n \in \NN$, there exist points 
		$\by_1, \dots, \by_n \in U_0$ and weights  $\omega_1, \dots, \omega_n$ such that 
	\begin{equation}\nonumber
		\varrho_n(B_{X,\bsigma}, L_2(U,X;\mu)) 
		\ \le \ 
	\sup_{v\in B_{X,\bsigma}} \norm{v - \tilde{S}_{{n}}^X v}{L_2(U,X;\mu)}
		\ \le \ C n^{-1/q},
	\end{equation}
		where  
	$\tilde{S}_n^X$ is defined as in \eqref{tilde{S}_n^X:=}.
	
	Moreover,
		\begin{equation}\nonumber
			(n+1)^{-1/q} 
			\ \le \
			\sup_{\bsigma: \ \|\bsigma^{-1}\|_{\ell_q(\NN) \le 1}}	\varrho_n(B_{X,\bsigma}, L_2(U,X;\mu)) 
            \ \lesssim\ n^{-1/q}.
		\end{equation}
\end{corollary}

 \begin{proof}	
     The upper bound in this theorem is derived from {Lemmas~\ref{lemma:leastsquaresbounds(iii)} and \ref{lemma:d_n><}}, the relation
 	\begin{equation*} \label{rho_n<}
 		\sqrt{\frac{1}{m} \sum_{k \ge m} k^{-2/q}}
 	\ \asymp 	m^{-1/q}, \ \ m \in \NN,
 \end{equation*}	
 and from the fact that this bound is independent of the sequence $\bsigma$.
 To prove the lower bound, one can take $\bsigma=(\sigma_s)_{s \in \NN}$ with 
 $\sigma_s =  (n+1)^{1/q}$ for $s \le n+1$, and 
 $\sigma_s = 2^{Ks/q} $ for $s > n+1$ with arbitrary $K \in \NN$. It is easy to check that  
 $\norm{\tilde{\bsigma}^{-1}}{\ell_q(\NN)} \le 1$, where 
 $\tilde{\bsigma}:= (1+2^{-Kn})^{-1/q}\bsigma$.
 We then have by Theorem \ref{thm:sameleastsquares} that
 \begin{align*}
 	\varrho_n(B_{X,\tilde{\bsigma}}, L_2(U,X;\mu))
 	\ &= \ 
 	\varrho_n(B_{\CC,\tilde{\bsigma}}, L_2(U,\CC;\mu)) 
    \ \ge \
 	d_n(B_{\CC,\tilde{\bsigma}}, L_2(U,\CC;\mu)) 
 \\
     \ &= \
 	\tilde{\sigma}_{n+1}^{-1}
 	= 
 (n+1)^{-1/q}(1+2^{-Kn})^{1/q}.
 \end{align*}
 Since $K$ is arbitrary, we get
 	\begin{equation}\nonumber
 	\sup_{\bsigma: \ \|\bsigma^{-1}\|_{\ell_q(\NN) \le 1}}	\varrho_n(B_{X,\bsigma}, L_2(U,X;\mu)) 
        \ \ge  (n+1)^{-1/q} \,.
 \end{equation}
 	\hfill
 \end{proof}
}

Next, we apply Corollary \ref{corollary:varrho_n(iii)} to Bochner spaces with infinite tensor-product probability measure, which {appear in approximation  of solutions to} parametric PDEs with random inputs and of holomorphic functions in Sections 
 \ref{Applications to parametric PDEs with random inputs} and 
 \ref{Applications to holomorphic functions}, respectively.

 For given $a,b > -1$, let $\nu_{a,b}$ be the Jacobi probability measure on $\II:= [-1,1]$ 
 with the density 
 \begin{equation*} \label{delta_ab}
 	\delta_{a,b}(y):=c_{a,b}(1-y)^a(1+y)^b, \quad
 	c_{a,b}:=\frac{\Gamma(a+b+2)}{2^{a+b+1}\Gamma(a+1)\Gamma(b+1)}.
 \end{equation*}
 Let $(J_k)_{k\in \NN_0}$ be the sequence of Jacobi polynomials on $\II$ 
 normalized with respect to the Jacobi probability measure $\nu_{a,b}$, i.e., 
 $$
 \int_{\II} |J_k(y)|^2 \rd \nu_{a,b}(y) =\int_{\II} |J_k(y)|^2 \delta_{a,b}(y) \rd y =1, \ \
 k\in \NN_0.
 $$
 Let $\gamma$ be the standard Gaussian probability measure on $\RR$ 
 with the density 
 \begin{equation*} \label{g}
 	g(y):=\frac 1 {\sqrt{2\pi}} \mathrm e^{-y^2/2} .
 \end{equation*}
 Let $(H_k)_{k\in \NN_0}$ be the sequence of Hermite polynomials on $\RRR$ 
 normalized with respect to the measure $\gamma$, i.e., 
 $$
 \int_{\RR} |H_k(y)|^2 \rd \gamma(y) =\int_{{\RR}} |H_k(y)|^2 g(y) \rd y =1, \ \
 k\in \NN_0.
 $$
 
 Throughout this section, we use the joint notation: $\UU$ denotes either $\II$ or $\RR$; $\UUi$ either $\IIi$ or $\RRRi$;
 \begin{equation} \nonumber
 	\mu
 	:=
 	\begin{cases}
 		\nu_{a,b}\ \ &{\rm if} \ \UU=\II, \\
 		\gamma\ \ &{\rm if} \ \UU=\RR;
 	\end{cases}
 	\quad
 \end{equation}
 \begin{equation} \nonumber
 	\phi_k
 	:=
 	\begin{cases}
 		J_{{k}}\ \ &{\rm if} \ \UU=\II, \\
 		H_{{k}}\ \ &{\rm if} \ \UU=\RR.
 	\end{cases}
 \end{equation}
 We next recall {the} concept of
 a probability measure $\bmu$ on $\UUi$ as 
 the infinite tensor product of the 
 measures $\mu$:
 \begin{equation} \nonumber
    \mathrm d\bmu(\by) 
 	:= \ 
    \bigotimes_{j \in \NN} \mathrm d \mu(y_j) , \quad \by = (y_j)_{j \in \NN} \in \UUi.
 \end{equation}
 (The sigma algebra for $\bmu$ is generated by the set of cylinders $A:= \prod_{j \in \NN} A_j$, where $A_j \subset \UU$ are univariate Lebesgue measurable sets and only a finite number of $A_i$ are different from $\UU$. For such a set $A$, we have $\bmu(A) = \prod_{j \in \NN} \mu(A_j)$).
 
 Let $X$ be a separable Hilbert space. Then a function $v \in L_2(\UUi,X;\bmu)$ can be represented by the GPC expansion
 \begin{equation} \label{GPCexpansion}
 	v=\sum_{\bs\in\FF} v_\bs \,\phi_\bs, \quad v_\bs \in X,
 \end{equation}
 with
 \begin{equation*}
 	\phi_\bs(\by)=\bigotimes_{j \in \NN}\phi_{s_j}(y_j),\quad 
 	v_\bs:=\int_{\UUi} v(\by)\,\phi_\bs(\by)\, \rd\bmu (\by), \quad 
 	\bs \in \FF.
 \end{equation*}
 
    For a family $\bsigma=(\sigma_\bs)_{\bs \in \FF}$ of positive numbers, denote by $B_{X,\bsigma}(\UUi)$ the set of all functions $v \in L_2(\UUi,X;\bmu)$ represented by 
 	the series \eqref{GPCexpansion} such that
 	\begin{equation*} \label{sigma-summability}
        \left(\sum_{\bs\in\FF} (\sigma_\bs \|v_\bs\|_{X})^2\right)^{1/2} \ \le 1.
 	\end{equation*}
 	
{ Notice that if $\|\bsigma^{-1}\|_{\ell_q(\FF)} < \infty$ for some $0 <q \le 2$, then for every $v \in B_{X,\bsigma}(\UUi)$, the series \eqref{GPCexpansion} converges absolutely and unconditionally in 
 $L_2(\UUi,X;\bmu)$ to $v$ (see \cite[Lemma 3.1]{Dung22} for the case $\UUi = \RRRi$, the case $\UUi = \IIi$ can be proven by the same arguments).
We can reorder the countable set $\FF$ as $\FF = (\bs_j)_{j \in \NN}$ so that the sequence 
 $\bsigma= (\sigma_{\bs_j})_{j \in \NN}$ is non-decreasing. Put 
 $U:=\UUi$, \ $\sigma_j:=\sigma_{\bs_j}$, \ $\varphi_j:=\phi_{\bs_j}$ and $v_j:=v_{\bs_j}$.  Then $B_{X,\bsigma}(\UUi)$ can be seen as the set $B_{X,\bsigma}$, defined as in Section~\ref{sec:intro}, of all functions $v \in L_2(U,X;\bmu)$ represented by 
 the series 
 \begin{equation*} 
 	v= \sum_{j \in \NN}  v_j\,\varphi_j, \quad v_j \in X,
 \end{equation*}
  such that
 \begin{equation*} 
 	\left(\sum_{j \in \NN} (\sigma_j \|v_j\|_{X})^2\right)^{1/2} \ \le 1.
 \end{equation*}
 In the next sections, due to  this representation of $B_{X,\bsigma}(\UUi)$, we 
 are able to employ  Corollary \ref{corollary:varrho_n(iii)} in various applications.}

 	\section{Applications to parametric elliptic PDEs} 
 	\label{Applications to parametric PDEs with random inputs} 
 	
 	\subsection{Introductory remarks} 
 	\label{Introducing remarks}
    Parametric PDEs have a very large or even infinite number of parametric variables, therefore, such problems are naturally treated as high-dimensional or infinite-dimensional approximation problems.
    As a model problem we consider parametric divergence-form elliptic PDEs with random inputs.
 	
 	Let $D \subset \RRR^d$ be a bounded Lipschitz domain. Consider the diffusion elliptic equation 
 	\begin{equation} \label{ellip}
 		- \dv (a(\bx)\nabla u(\bx))
 		\ = \
 		f (\bx),\ \ \bx \in D,
 		\quad u|_{\partial D} \ = \ 0, 
 	\end{equation}
 	for a {given fixed right-hand side $f \in H^{-1}(D)$ and a 
 	spatially variable scalar diffusion coefficient $a$.
 	Denote by $V:= H^1_0(D)$ the energy space.} If $a \in L_\infty(D)$ satisfies the ellipticity assumption
 	\begin{equation} \nonumber
 		0<a_{\min} \leq a \leq a_{\max}<\infty,
 	\end{equation}
    by the well-known Lax-Milgram lemma, there exists a unique weak
 	solution $u \in V$ to the equation~\eqref{ellip}, satisfying  
 	\begin{equation} \nonumber
 		\int_{D} a(\bx)\nabla u(\bx) \cdot \nabla v(\bx) \, \rd \bx
 		\ = \
 		\langle f , v \rangle, \quad \forall v \in V,
 	\end{equation}
    with $\langle\cdot,\cdot\rangle$ being the duality pairing between $V'=H^{-1}(D)$ and $V=H_0^1(D)$.
 	
 	PDEs with parametric and stochastic inputs are a common model used in science
 	and engineering. Depending on the 
 	nature of the modeled object, the parameters involved in them may be 
    either deterministic or random. The random nature reflects the uncertainty in various parameters,
    which are present in the physical phenomenon modeled by the equation.
 	For equation~\eqref{ellip}, 
 	we consider the diffusion coefficients having a parametric form $a=a(\by)$, where $\by=(y_j)_{j \in \NN}$
    is a sequence of real-valued parameters ranging over the set 
 	$\UUi$ which is either $\RRRi$ or $\IIi$.
 	Denote by $u(\by)$ the solution to the 
 	parametric  elliptic diffusion equation
 	\begin{equation} \label{parametricPDE}
 		- {\rm div} (a(\by)(\bx)\nabla u(\by)(\bx))
 		\ = \
        f {(\bx)}, \ \ \bx \in D, \ \by \in \UUi,
 		\quad u(\by)|_{\partial D} \ = \ 0, \ \by \in \UUi. 
 	\end{equation}	
 	The resulting solution operator maps
 	$\by\in \UUi$ to $u(\by)\in V$. The objective is to 
 	achieve a numerical 
 	approximation of this complex map by a small number of parameters with a
 	guaranteed error in a given norm. 
 	
    We commence with the log-normal case when $\UUi= \RRRi$ and the diffusion coefficient $a$ is of the form
 	\begin{equation} \label{lognormal}
 		a(\by)=\exp(b(\by)), \quad {\text{with }}\ b(\by)=\sum_{j = 1}^\infty y_j\psi_j,
 	\end{equation}
 	and $y_j$ are i.i.d.\ standard Gaussian random 
 	variables, and the affine case when $\UUi= \IIi$ and the diffusion coefficient $a$ is of the form
 	\begin{equation} \label{affine}
 		a(\by)= \bar a + \sum_{j = 1}^\infty y_j\psi_j,
 	\end{equation}
 	and $y_j$ are i.i.d.\ standard Jacobi random 
 	variables.
 	Here $\bar a\in L_\infty(D)$ and $\psi_j \in L_\infty(D)$ {for both  cases.}
 	
 	An approach to studying summability that takes into account the 
 	support properties of the component functions $\psi_j$, has been recently proposed in \cite{BCM17} for the affine parametric case, 
 	in \cite{BCDM17} for {the log-normal parametric case}, 
 	and in \cite{BCDC17} for {extensions} of both cases to 
 	second-order Sobolev norms of the corresponding GPC expansion coefficients. 
 	This approach leads to significant improvements
 	on the results on $\ell_p$-summability and weighted $\ell_2$-summability of GPC expansion coefficients, and therefore, 
 	on best $n$-term semi-discrete and fully discrete approximations 
 	when the component functions $\psi_j$ have limited overlap, 
 	such as splines, finite elements or compactly supported wavelet bases. 
 	In this section, we will employ the results of the previous section {to obtain convergence rates} of sampling recovery of solutions to parametric elliptic PDEs with random inputs, which are derived from results on weighted $\ell_2$-summability in \cite{BCDC17,BCM17}.

 	\subsection{Convergence rates}
 	\label{Convergence rates}
 	{We first present}  some known weighted $\ell_2$-summability 
 	results for solutions $u$ of parametric elliptic PDEs with random inputs.
 	
 	For the log-normal case, we have the following result on weighted $\ell_2$-summability.
 	
 	\begin{lemma}\label{lemma:weighted summablity,lognormal} 
 		Let $0 < q <\infty$, $\eta \in \NN$ with $\eta > 2/q$, and	
 		$\brho:=(\rho_j) _{j \in \NN}$ be a sequence of positive 
 		numbers such that $\brho^{-1} \in \ell_q(\NN)$ and
 		\begin{equation} \label{assumption1}
 			\left\| \sum _{j \in \NN} \rho_j |\psi_j| \right\|_{L_\infty(D)} 
 			<\infty\;.
 		\end{equation} 
 		{Then for the weak solution $u$ to the parametric elliptic PDE \eqref{parametricPDE} with the log-normal  diffusion coefficient as in \eqref{lognormal},} there exist positive constants $M,N$ such that 
 		\begin{equation} \label{w-sum1}
 			\left(\sum_{\bs\in\FF} (\sigma_\bs \| u_\bs\|_{V})^2\right)^{1/2} \le M < \infty\ \ \text{with} \ \
 			\norm{\bsigma^{-1}}{{\ell_q(\FF)}} \le N < \infty,
 		\end{equation}
 		where
 		we define $\bsigma:=\bsigma(\eta,\brho)=(\sigma_\bs(\eta,\brho))_{\bs \in \FF}$ as
 		\begin{equation} \label{sigma_s}
            \sigma_{\bs}^2:= \sigma_{\bs}(\eta,\brho)^2:=\sum_{\bs': \ \sup_{j\in\NN} s_j' \leq \eta}{\bs\choose \bs'} \prod_{j \in \NN}\rho_j^{2s_j'}, \ \ \bs \in \FF.
 		\end{equation}	 		
 	\end{lemma} 
{
 	\begin{proof}
 		By \cite[Theorems 3.3 and 4.2]{BCDM17}, there exists a constant $M$   such that 
 		\begin{equation} 
 			\sum_{\bs\in\FF} (\sigma_\bs \| u_\bs\|_{V})^2
 			=
            \sum_{\bs': \ \sup_{j\in\NN} s_j'\leq \eta} \frac{\brho^{2\bs'}}{\bs'!} 
 				\int_{\RRi}\norm{\partial^{\bs'} u(\by)}{V}^2 \rd \bgamma(\by)
 				\le M.
 		\end{equation}
 This proves the first inequality in \eqref{w-sum1}. Since $\brho^{-1}\in \ell_q(\NN)$.
        The second inequality in \eqref{w-sum1} follows from \cite[Lemma 5.1]{BCDM17}.
 		\hfill
 	\end{proof}
 }

 	\medskip
 	For the affine case, we have the following result on weighted $\ell_2$-summability.
 	\begin{lemma} \label{lemma:weighted summablity,affine} 
 		Let ${\rm ess} \inf \bar a>0$. 	Let $0 < q <\infty$ and	
 		$(\rho_j) _{j \in \NN}$ be a sequence of positive 
 		numbers such that $(\rho_j^{-1}) _{j \in 
 			\NN}$ belongs to $\ell_q(\NN)$ and
 		\begin{equation} \label{assumption2}
 			\left \| \frac{\sum _{j \in \NN} \rho_j|\psi_j|}{\bar a} \right \|_{L_\infty(D)} 
 			< 1.
 		\end{equation} 
 		{Then for the weak solution $u$ to the parametric elliptic PDE \eqref{parametricPDE}  with the affine  diffusion coefficient as in \eqref{affine},} there exist positive constants $M,N$ such that we have that
 		\begin{equation} \label{WS-affine}
 			\left(\sum_{\bs\in\FF} (\sigma_\bs \| u_\bs\|_{V})^2\right)^{1/2} \le M <\infty \ \ \text{with} \ \
 			\norm{\bsigma^{-1}}{{\ell_q(\FF)}} \le N < \infty,
 		\end{equation}
 		where {$\bsigma:=\bsigma(\brho)=(\sigma_\bs(\brho))_{\bs \in \FF}$} is defined by
 		$$
 		\sigma_\bs:=\sigma_\bs (\brho):= \prod _{j \in \NN} c_{s_j}^{a,b}\rho_j^{s_j},
 		$$
        with $c_0^{a,b}:= 1$, and
 		\begin{equation*}
 			c_k^{a,b}
 			:= \
 			\sqrt{\frac{(2k+a+b+1)k! \Gamma(k+a+b+1) \Gamma(a+1) \Gamma(b+1)}
 				{\Gamma(k+a+1)\Gamma(k+b+1)\Gamma(a+b+2)}}, \ k \in \NN.
 		\end{equation*}
 	\end{lemma}
 	
 	{
 		\begin{proof}
The first inequality  in \eqref{WS-affine} follows from \cite[Remark 5.3]{BCM17}, the second one from 	\cite[(63)]{BCDC17}.
 		\hfill
 	\end{proof}
 }
 	
 	\medskip
 	{Notice that assumptions} \eqref{assumption1} and \eqref{assumption2} are different from the assumption 
        {$\brac{\|\psi_j\|_{L_\infty(D)}}_{j \in \NN} \in \ell_p(\NN)$} considered in \cite{CCDS13,CDS10,CDS11}, or 
        {$\brac{j\|\psi_j\|_{L_\infty(D)}}_{j \in \NN} \in \ell_p(\NN)$} considered in \cite{HoSc14} for some $0<p<1$.
        The latter {do not take} into account the support properties of the component functions $\psi_j$, and hence, lead to worse results when the overlaps of the supports of $\psi_j$ are finite. 
 		For a more detailed discussion on the advantages of assumptions \eqref{assumption1} and \eqref{assumption2} over these ones, we refer the reader to \cite{BCM17, BCDM17, Dung21, DNSZ2023}.

 		We are now in a position to formulate the most significant result of our study.
 		Let 
 		$$
 		V_m := \operatorname{span}\{\phi_{\bs_j}\}_{j=1}^{m}
 		$$
 		 be the subspace spanned by the (Hermite or Jacobi) polynomials $\phi_{\bs_j}$, $j=1,...,m$.
 		 {Let $\UUi_0 \subset \UUi$ be a fixed set of full measure such  that the pointwise evaluations  $u(\by)$ are  well-defined for every $\by \in \UUi_0$. (Such a set exists due to Lemma~\ref{lemma:weighted summablity,lognormal} or Lemma~\ref{lemma:weighted summablity,affine}.)}
 	By applying Corollary~\ref{corollary:varrho_n(iii)}, from Lemmata \ref{lemma:weighted summablity,lognormal} and
 	\ref{lemma:weighted summablity,affine}, and utilizing the homogeneous argument we obtain
 	
 	\begin{theorem}\label{theorem:sampling-lognormal-affine(iii)}
 		Let the assumptions and notations of Lemma~\ref{lemma:weighted summablity,lognormal} or 
 		of Lemma~\ref{lemma:weighted summablity,affine} with $0 < q < 2$ hold for the log-normal case \eqref{lognormal} $(\UUi = \RRi)$ or for the affine case \eqref{affine} $(\UUi = \IIi)$, respectively. 
 {Let  $u$ be the weak solution to the parametric elliptic PDE \eqref{parametricPDE}  with the log-normal diffusion coefficient as in \eqref{lognormal} or the affine diffusion coefficient as in 
 	\eqref{affine}, respectively.}  Then for any $n \in \NN$, there exist points 
$\by_1, \dots, \by_n \in \UUi_0$ and weights  $\omega_1, \dots, \omega_n$ such that 
	\begin{equation*}
	\norm{u - \tilde{S}_n^V u}{L_2(\UUi,V;\bmu)}
	\ \le C MN n^{-1/q}
\end{equation*}
    with a constant $C$ independent of $n,M,N$, and $u$,
	where  
$\tilde{S}_n^V$ is defined as in \eqref{tilde{S}_n^X:=} for $X=V$.
 	\end{theorem}		
 	
 	\section{Applications to holomorphic functions} 
\label{Applications to holomorphic functions}

{The sparsity analysis for parametric elliptic PDEs with log-normal diffusion coefficients, as in \cite{BCDC17, BCDM17}, hinges on real-variable bootstrapping to establish sparsity. This approach encounters technical obstacles when extending to higher spatial regularity or more general parametric PDEs. By contrast, complex-variable methods proposed in \cite{DNSZ2023}, using holomorphic extension of the solution offer an alternative pathway to sparsity and regularity, often simplifying the treatment of smoothness and broadening applicability.
One advantage of
establishing sparsity of Hermite GPC expansion coefficients via
holomorphy rather than by successive differentiation is that it allows
to derive, in a unified way, weighted $\ell_2$-summability bounds for the 
coefficients of a Hermite GPC expansion whose magnitudes are measured in terms
Sobolev scales on the domain $D$.

Formally, in the log-normal case \eqref{lognormal} of the parametric equation \eqref{parametricPDE},
replacing $\by=(y_j)_{j \in \NN}\in \RRi$ in the coefficient $a(\by)$ in \eqref{lognormal} by
$\bz=(z_j)_{j \in \NN}=(y_j+i \xi_j)_{j \in \NN}\in \CC^\infty$, 
the real part of $a(\bz)$ is
\begin{equation*} \label{Re(a)} \mathfrak{R}[a(\bz)] =
	\exp\Bigg({\sum_{j \in \NN} y_j\psi_j}\Bigg) \cos\Bigg(\sum_{j
		\in \NN} \xi_j\psi_j\Bigg)\,.
\end{equation*}
We find that $\mathfrak{R}[a(\bz)]>0$ if
$$
\bigg\|\sum_{j \in \NN} \xi_j\psi_j \bigg\|_{L_\infty(D)} <
\frac{\pi}{2}.
$$
 This motivates the study
of the analytic continuation of the solution map $\by \mapsto u(\by)$
to $\bz \mapsto u(\bz)$ for complex parameters
$\bz = (z_j)_{j \in \NN}$ where each
$z_j$ lies in the strip
\begin{equation} \label{eq:DefSjrho} 
	\mathcal{S}_j (\brho):= \{ z_j\in
	\CC\,: |\mathfrak{Im}z_j| < \rho_j\}
\end{equation}
and where $\rho_j>0$ and
$\brho=(\rho_j)_{j \in \NN}$ is any sequence of
positive numbers such that
\begin{equation*} \label{k-011} \Bigg\|\sum_{j\in \NN} \rho_j
	|\psi_j|\Bigg\|_{L_\infty(D)} < \frac{\pi}{2}\,.
\end{equation*}

Let $\brho=(\rho_j)_{j\in \NN}$ be a sequence of
non-negative numbers and assume that $J \subseteq \supp(\brho)$ is
finite. Define for $\by \in \RRi$,
{
\begin{equation}\label{eq:Snubrho}
	\mathcal{S}_J (\by,\brho) := \big\{(z_j)_{j\in \NN} \in \CCi: z_j \in
	\mathcal{S}_j(\brho)\ \text{if}\ j\in J, \ \text{and}\ z_j=y_j \
	\text{if}\ j\not \in J \big\}.
\end{equation}
}
For the definition of Sobolev spaces $H^r:= H^r(D)$ and $W^r_\infty:= W^r_\infty(D)$ {as well as that of a} $C^m$-domain see, e.g., \cite{Adams2003}.
The following result on holomorphy of the parametric solution has been proven in \cite[Proposition 3.21]{DNSZ2023}.
\begin{lemma}\label{lemma:holoh1}
		{Let $r \in \NN$ and $D$ be a bounded domain with either $C^\infty$-boundary or convex $C^{r-1}$-boundary.	}
	Let the sequence $\brho=(\rho_j)_{j\in \NN}\in [0,\infty)^\infty$
	satisfy
\begin{equation}\label{eq:leqkappa}
		\Bigg\|\sum_{j \in \NN} \rho_j |\psi_j | \Bigg\|_{L_\infty(D)} 
		\leq 
		\kappa < \frac{\pi}{2}\,.
	\end{equation}
	Let $\by_0=(y_{0,1},y_{0,2},\ldots) \in {\RRi}$ be such that $b(\by_0)$
	belongs to $W^{r-1}_\infty$, and let $J\subseteq \supp(\brho)$ be a
	finite set.
	Then the weak parametric solution $u$ of the variational form of \eqref{parametricPDE} with log-normal random inputs \eqref{lognormal} is
	holomorphic on $ \mathcal{S}_J (\by_0,\brho) $ as a function of the
	parameters
	$\bz_J=(z_j)_{j \in \NN} \in \mathcal{S}_J (\by_0,\brho)$
	taking values in $H^r(D)$ with $z_j = y_{0,j}$ for $j\not \in J$ held
	fixed.
\end{lemma}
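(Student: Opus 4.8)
The plan is to follow the standard route for parametric operator equations: first secure well-posedness of the complexified problem through a coercivity estimate, and then promote pointwise solvability to joint holomorphy. Since $J$ is finite, $\bz_J \mapsto u(\bz)$ is a map from an open subset of $\CC^{|J|}$ into the Banach space $H^r(D)$, so it suffices to establish local boundedness together with separate complex differentiability in each coordinate $z_j$, $j \in J$; joint holomorphy then follows by combining weak holomorphy (obtained from separate differentiability and the scalar Hartogs theorem applied to $\ell \circ u$ for each continuous functional $\ell$) with local boundedness.

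First I would verify complex ellipticity. Writing $b(\bz) = b(\by_0) + \sum_{j \in J}(z_j - y_{0,j})\psi_j$, so that $a(\bz) = \exp(b(\bz))$, the real part factorizes as
\[
\mathfrak{R}[a(\bz)] = \exp(\mathfrak{R}\,b(\bz))\,\cos(\mathfrak{Im}\,b(\bz)).
\]
Since $z_j = y_{0,j}$ is real for $j \notin J$, one has $\mathfrak{Im}\,b(\bz) = \sum_{j \in J}(\mathfrak{Im}\,z_j)\psi_j$, and the strip constraints $\abs{\mathfrak{Im}\,z_j} < \rho_j$ together with \eqref{eq:leqkappa} give $\norm{\mathfrak{Im}\,b(\bz)}{L^\infty} \le \kappa < \pi/2$. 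Hence $\cos(\mathfrak{Im}\,b(\bz)) \ge \cos\kappa > 0$ pointwise, and on any compact $K \subset \mathcal{S}_J(\brho)$ the real part $\mathfrak{R}\,b(\bz)$ is bounded, so $\mathfrak{R}[a(\bz)] \ge c_K > 0$ uniformly on $K$. The sesquilinear form $B_{\bz}(u,v) := \int_D a(\bz)\,\nabla u \cdot \overline{\nabla v}\,\rd\bx$ is then bounded and coercive, $\mathfrak{R}\,B_{\bz}(u,u) \ge c_K\norm{u}{V}^2$, so Lax--Milgram furnishes a unique weak solution $u(\bz) \in V$ with $\norm{u(\bz)}{V} \le c_K^{-1}\norm{f}{H^{-1}(D)}$; this is exactly local boundedness of the solution map.

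Next I would prove separate complex differentiability. Fix $j_0 \in J$ and set $d_h := (u(\bz + h\be_{j_0}) - u(\bz))/h$ for small $h \in \CC$. Subtracting the weak formulations at $\bz + h\be_{j_0}$ and $\bz$ yields
\[
\int_D a(\bz + h\be_{j_0})\,\nabla d_h \cdot \overline{\nabla v}\,\rd\bx = -\int_D \frac{a(\bz + h\be_{j_0}) - a(\bz)}{h}\,\nabla u(\bz) \cdot \overline{\nabla v}\,\rd\bx.
\]
Because $\bz \mapsto a(\bz)$ is holomorphic into $L_\infty(D)$ with $\partial_{z_{j_0}} a(\bz) = \psi_{j_0}\,a(\bz)$, the difference quotient of the coefficient converges to $\psi_{j_0}\,a(\bz)$ in $L_\infty(D)$ as $h \to 0$, and the uniform coercivity lets me pass to the limit: $d_h \to w$ in $V$, where $w$ is the unique solution of $B_{\bz}(w,v) = -\int_D \psi_{j_0}\,a(\bz)\,\nabla u(\bz) \cdot \overline{\nabla v}\,\rd\bx$. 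This identifies $\partial_{z_{j_0}} u(\bz) = w$; carrying it out for each $j_0 \in J$ gives separate complex differentiability, and with the local boundedness above we obtain joint holomorphy of $\bz_J \mapsto u(\bz)$ into $V$.

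Finally, to get values in $H^r(D)$ I would invoke elliptic regularity. Under the $C^m$-domain hypothesis and with $a(\bz) \in W^{r-1}_\infty(D)$ --- which holds because the fixed infinite tail $b(\by_0)$ lies in $W^{r-1}_\infty$ by assumption, the finite perturbation $\sum_{j \in J}(z_j - y_{0,j})\psi_j$ stays in $W^{r-1}_\infty$, and the exponential maps $W^{r-1}_\infty$ holomorphically into itself --- the solution $u(\bz)$ and each $w = \partial_{z_{j_0}} u(\bz)$ belong to $H^r(D)$ with bounds uniform on $K$. The main obstacle I anticipate is precisely this $H^r$-upgrade: one must verify that the difference quotients $d_h$ converge in the stronger $H^r$-norm rather than only in $V$, which requires the elliptic regularity estimate for $d_h$ to be uniform in $\bz$ and stable under the convergences $a(\bz + h\be_{j_0}) \to a(\bz)$ and $(a(\bz + h\be_{j_0}) - a(\bz))/h \to \psi_{j_0}\,a(\bz)$ in $W^{r-1}_\infty$. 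Granting this, weak holomorphy together with local boundedness in $H^r(D)$ delivers the asserted $H^r(D)$-valued holomorphy of $u$ on $\mathcal{S}_J(\brho)$.
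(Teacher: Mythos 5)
First, a point of comparison: the paper itself gives no proof of Lemma~\ref{lemma:holoh1}; the statement is quoted from \cite[Proposition~3.21]{DNSZ2023}, so your attempt can only be measured against that standard argument. Your reconstruction does follow its route: complexification of the coefficient, uniform ellipticity on compact subsets of $\mathcal{S}_J(\brho)$ via the factorization $\mathfrak{R}[a(\bz)]=\exp(\mathfrak{R}\,b(\bz))\cos(\mathfrak{Im}\,b(\bz))$ together with \eqref{eq:leqkappa}, Lax--Milgram for the complexified sesquilinear form, difference quotients for separate complex differentiability, and Hartogs plus local boundedness for joint holomorphy. All of these steps are sound. One bookkeeping remark: as stated here the lemma suppresses hypotheses that any proof needs and that you correctly re-introduce ($\partial D$ smooth enough, $\psi_j\in W^{r-1}_\infty$ for $j\in J$); note that you also tacitly need $f\in H^{r-2}(D)$ rather than merely the standing assumption $f\in H^{-1}(D)$ to get $H^r$-bounds, exactly as in the assumptions surrounding Lemma~\ref{lemma:s=1}.

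Second, the step you flag as the main obstacle---convergence of the difference quotients $d_h$ in the $H^r$-norm, with elliptic regularity estimates ``uniform and stable'' in $\bz$ and $h$---is not needed, and you already hold the tool that makes it unnecessary. Once you know (a) $\bz\mapsto u(\bz)$ is holomorphic as a $V$-valued map and (b) $u$ is locally bounded as an $H^r$-valued map (this uses elliptic regularity only at each fixed $\bz$, with no uniformity in $h$), the $H^r$-valued holomorphy follows by soft functional analysis. Fix a closed polydisc in $\mathcal{S}_J(\brho)$ on which $\|u(\bz)\|_{H^r}\le M$. The Taylor coefficients of $u$ at its center are $V$-valued Cauchy integrals, hence $V$-limits of Riemann sums; each Riemann sum is a finite linear combination of values of $u$ and therefore lies in a closed ball of $H^r\cap V$ whose radius is given by the usual Cauchy estimate. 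Since $H^r$ is a Hilbert space, these balls are weakly compact, and weak convergence in $H^r\cap V$ implies weak convergence in $V$; by uniqueness of limits in $V$, each Taylor coefficient lies in $H^r$ and satisfies the Cauchy estimate in the $H^r$-norm. The power series therefore converges in $H^r$, and its sum agrees with $u$ because it does so in $V$. In short: a locally $H^r$-bounded map that is holomorphic into the larger space $V$ is automatically holomorphic into $H^r$. This closes the one gap you left open (``granting this'') without any uniform-in-$h$ regularity analysis of the difference quotients.
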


Based on the holomorphy of the parametric solution as in Lemma \ref{lemma:holoh1}, a weighted $\ell_2$-summability of the Sobolev $H^r$-norm of the {Hermite GPC expansion} coefficients of the parametric solution $u$ has been established in 
\cite[Theorem 3.25]{DNSZ2023} as follows.

\begin{lemma} \label{lemma:H^r} 
Let $r \in \NN$, $D$ be a bounded domain with either $C^\infty$-boundary or convex $C^{r-1}$-boundary, {and $f \in H^{r-1}(D)$.}	Assume that for every $j\in \NN$, $\psi_j\in W^{r-1}_\infty$, and there
	exists a positive sequence $(\lambda_j)_{j\in \NN}$ such that
	$(\exp(-\lambda_j^2))_{j\in \NN}\in \ell_1(\NN)$ and the
	series $\sum_{j\in \NN}\lambda_j|D^{\balpha}\psi_j|$ converges in
	$L_\infty(D)$ for all $\balpha \in \NN_0^d$ with ${|\balpha| \le r -1}$.
	Let 
	$\bvarrho=(\varrho_j)_{j\in \NN}$
	be a sequence of positive numbers satisfying
	$(\varrho_j^{-1})_{j \in \NN}\in \ell_q(\NN)$ for some
	$0 < q < \infty$. Assume that, for each
	$\bs\in \FF$, there exists a sequence 
	$\brho_\bs= (\rho_{\bs,j})_{j \in \NN}$ of non-negative numbers such that
	$\supp(\bs)\subseteq \supp(\brho_\bs)$,
	\begin{equation} \label{assumption: theorem 3.1}
		\sup_{\bs\in \FF} \sum_{|\balpha| \le s -1}\Bigg\| \sum_{j\in
			\NN}\rho_{\bs,j}|D^{\balpha}\psi_j|\Bigg\|_{L_\infty(D)}\leq
		\kappa <\frac{\pi}{2}, \qquad \text{and} \qquad
		\sum_{\bs: \ |\bs|_\infty\leq \eta}
		\frac{\bs!\bvarrho^{2\bs}}{\brho_\bs^{2\bs}}
		<\infty
	\end{equation} 
	with $\eta \in \NN$, $\eta > 2/q$. {Let  $u$ be the weak solution to the parametric elliptic PDE \eqref{parametricPDE}  with the log-normal diffusion coefficient as in \eqref{lognormal}.}
    Then there exist positive constants $M,N$ such that
	\begin{equation} \label{eq:beta-u} 
		\sum_{\bs\in\FF} \brac{\sigma_\bs(\eta,\bvarrho)\|u_\bs\|_{H^r}}^2 \le M <\infty \ \ \text{with} \ \
		\norm{\bsigma(\eta,\bvarrho)^{-1}}{\ell_q(\FF)} \le N < \infty,		
	\end{equation}
    where
    $\bsigma(\eta,\bvarrho)=(\sigma_\bs(\eta,\bvarrho))_{\bs \in \FF}$ is given by \eqref{sigma_s}.
\end{lemma}

By applying Corollary \ref{corollary:varrho_n(iii)}, from Lemma \ref{lemma:H^r}, under the assumptions and notation of Lemma~\ref{lemma:H^r}, and utilizing the homogeneous argument we again obtain 

\begin{corollary}\label{corollary:sampling-lognormal}
	Let the assumptions and notation of Lemma \ref{lemma:H^r} hold  for some
	$0 < q < 2$.  {Let  $u$ be the weak solution to the parametric elliptic PDE \eqref{parametricPDE}  with the log-normal diffusion coefficient as in \eqref{lognormal}.}
		Then for any $n \in \NN$, there exist points 
		$\by_1, \dots, \by_n \in \UUi_0$ and weights  $\omega_1, \dots, \omega_n$ such that 
	\begin{equation} \label{convergence-rate-H^r}
		\norm{u - \tilde{S}_n^{H^r} u}{L_2(\RRi,H^r;\bgamma)}
		\ \le C MN n^{-1/q}
	\end{equation}
	with a constant $C$ independent {of} $n,M,N$ and $u$,
	 where 
	$\tilde{S}_n^{H^r}$ is defined as in \eqref{tilde{S}_n^X:=} for $X= H^r$.
	\end{corollary}
    Note that the convergence rate in \eqref{convergence-rate-H^r} of Corollary \ref{corollary:sampling-lognormal} holds for every $r\in \NN$, and, moreover, in the case $r=1$ significantly improves the previously known rate from \cite[Corollary~5.9]{Dung21} by a factor of $n^{-1/2}$.
	
The results of Lemmata \ref{lemma:holoh1} and \ref{lemma:H^r} encourage us to investigate the holomorphy and weighted $\ell_2$-summability as a sequence for a wider class of functions on $\RRi$ and application to approximation for parametric PDEs with log-normal random inputs.
We recall the concept of ``$(\bb,\xi,\varepsilon,X)$-holomorphic
functions'' on $\RRi$ which has been introduced {in \cite[Definition 4.1]{DNSZ2023}} for general parametric PDEs with random input data. 
For $m\in\NN$ and a positive sequence $\bvarrho=(\varrho_j)_{j=1}^m$, we put
\begin{equation*}
	\label{eq:Sjrho}
	\Ss(\bvarrho) := \set{\bz\in \CC^m}{|\mathfrak{Im}z_j| < \varrho_j~\forall j}\qquad\text{and}\qquad
	\Bb(\bvarrho) := \set{\bz\in\CC^m}{|z_j|<\varrho_j~\forall j}.
\end{equation*}

Let $X$ be a complex separable Hilbert space,
$\bb=(b_j)_{j\in\NN}$ a positive sequence, and $\xi>0$, $\varepsilon>0$.
For $m\in\NN$ we say that a positive sequence $\bvarrho=(\varrho_j)_{j=1}^m$ is
\emph{$(\bb,\xi)$-admissible} if
\begin{equation*}\label{eq:adm}
	\sum_{j=1}^m b_j\varrho_j\leq \xi\,.
\end{equation*}
A function $v\in L_2(\RRi,X;\gamma)$ is called
$(\bb,\xi,\varepsilon,X)$-holomorphic if
\begin{enumerate}
	\item[{\rm (i)}]\label{item:hol} for every $m\in\NN$ there exists
	$v_m:\RR^m\to X$, which, for every $(\bb,\xi)$-admissible
	$\bvarrho$, admits a holomorphic extension
	(denoted again by $v_m$) from $\Ss(\bvarrho)\to X$; furthermore,
	for all $m<m'$
	\begin{equation*}\label{eq:un=um}
		v_m(y_1,\dots,y_m)=v_{m'}(y_1,\dots,y_m,0,\dots,0)\qquad\forall (y_j)_{j=1}^m\in\RR^m,
	\end{equation*}
	
	\item[{\rm (ii)}]\label{item:varphi} for every $m\in\NN$ there exists
	$\varphi_m:\RR^m\to\RR_+$ such that
	$\norm{\varphi_m}{L_2(\RR^m;\bgamma)}\le\varepsilon$ and
\begin{equation*} \label{ineq[phi]}
		\sup_{\text{{$\bvarrho$} is $(\bb,\xi)$-adm.}}~\sup_{\bz\in
			\Bb(\bvarrho)}\norm{v_m(\by+\bz)}{X}\le
		\varphi_m(\by)\qquad\forall\by\in\RR^m,
	\end{equation*}
	\item[{\rm (iii)}]\label{item:vN} with $\tilde v_m:\RRRi\to X$ defined by
	$\tilde v_m(\by) :=v_m(y_1,\dots,y_m)$ for $\by\in \RRRi$ it holds
	\begin{equation*}
		\lim_{m\to\infty}\norm{v-\tilde v_m}{ L_2(\RRi, X;\bgamma)}=0.
	\end{equation*}
\end{enumerate}

We mention some important examples of $(\bb,\xi,\varepsilon,X)$-holomorphic functions on $\RRi$ which are solutions to parametric PDEs with log-normal random inputs and which were studied in \cite{DNSZ2023}. 
Let $b(\by)$ be defined as in \eqref{lognormal} and $\Vv$ a holomorphic map from an open set in $L_\infty(D)$ to $X$. Then function compositions of the type 
$$
v(\by)= \Vv(\exp(b(\by)))
$$
are $(\bb,\xi,\varepsilon,X)$-holomorphic under certain conditions \cite[Proposition 4.11]{DNSZ2023}.
An example for the map $\mathcal V$ would be the solution operator, which maps the diffusion coefficient $b(\by)$ to the solution $v(\by)$.
The abstraction allows to consider structurally similar PDEs with log-normal random inputs as well.
This allows us to apply weighted $\ell_2$-summability {for  approximation} of solutions $v(\by)= \Vv(\exp(b(\by)))$ as $(\bb,\xi,\varepsilon,X)$-holomorphic functions on various function spaces $X$, to a wide range of parametric and stochastic PDEs with log-normal inputs.
Such function spaces $X$ include, for example, high-order regularity spaces $H^s(D)$ \cite[Section 4.3.1]{DNSZ2023} and corner-weighted Sobolev (Kondrat'ev) spaces $K^s_\varkappa(D)$ ($s \ge 1$) for the parametric elliptic PDEs \eqref{ellip} with log-normal inputs \eqref{lognormal} \cite[Section 7.6.1]{DNSZ2023}, spaces of solutions to linear parabolic PDEs with log-normal inputs \eqref{lognormal} \cite[Section 4.3.2]{DNSZ2023}, spaces of solutions to linear elastics equations with log-normal modulus of elasticity 
\cite[Section 4.3.3]{DNSZ2023}, spaces of solutions to Maxwell equations with log-normal permittivity \cite[Section 4.3.4]{DNSZ2023}; spaces of posterior densities and of their linear functionals in Bayesian inverse problems \cite[Section 5]{DNSZ2023}.

The following key result on weighted $\ell_2$-summability of $(\bb,\xi,\varepsilon,X)$-holomorphic functions has been proven in \cite[ Theorem 4.9]{DNSZ2023}.
\begin{lemma} \label{lemma:weighted summability, holomorphic} Let $v$ be
	$(\bb,\xi,\varepsilon,X)$-holomorphic for some $\bb\in \ell_p(\NN)$ with $0< p <1$. Let $\eta\in\NN$ and let the sequence $\brho=(\rho_j)_{j \in \NN}$ be defined by
	$$
{\rho_j:=b_j^{p-1}\frac{\xi}{4\sqrt{\eta!}\norm{\bb}{\ell_p(\NN)}}.}
	$$
Then we have 
	\begin{equation*} \label{ell_2-summability}
		\left(\sum_{\bs\in\FF} (\sigma_\bs \|v_\bs\|_{X})^2\right)^{1/2} \ \le M \ <\infty, \ \ \text{with} \ \
		\norm{\bsigma^{-1}}{\ell_q(\FF)} \le N < \infty,
	\end{equation*}	
	where $q := 2p/(2-p)$,
{$\bsigma:=\bsigma(\eta,\brho)=(\sigma_\bs(\eta,\brho))_{\bs \in \FF}$ is given by \eqref{sigma_s}, 
	$M= \varepsilon C_{\bb}$ and
	$N= C_{\bb,\xi}$  with some positive constants $C_{\bb}$ and $C_{\bb,\xi}$.}
\end{lemma}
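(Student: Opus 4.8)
The plan is to bound each Hermite (GPC) coefficient $\|v_\bs\|_X$ directly from the $(\bb,\xi,\varepsilon,X)$-holomorphy of $v$, and then to carry out two separate summations: one giving the bound $\le M$ and one giving $\norm{\bsigma^{-1}}{\ell_q(\NN)}\le N$. First I would reduce to finitely many variables: since $\varphi_\bs$ depends only on the coordinates in $\supp(\bs)$, condition (iii) together with the compatibility relation in (i) lets me compute $v_\bs=\int_{\RRi}v\,\varphi_\bs\,\rd\gamma$ from the finite-dimensional restriction $v_m$ with $m\ge\max\supp(\bs)$. On this level I extract a per-coefficient estimate from holomorphy. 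By the Rodrigues formula one has $v_\bs=(\bs!)^{-1/2}\int\partial^\bs v_m\,\rd\gamma$, and the Gaussian majorant (ii) on the polydisc $\Bb(\bvarrho)$ turns a Cauchy estimate into a bound in terms of any $(\bb,\xi)$-admissible $\bvarrho$; equivalently, and more usefully, I would shift the contour of $\int v_m\,\varphi_\bs\,\rd\gamma$ into the strip $\Ss(\bvarrho)$ and invoke the Hermite addition formula $\tilde H_k(y+\mathrm i\tau)=\sum_{l\le k}\binom kl(\mathrm i\tau)^{k-l}\tilde H_l(y)$. The one-dimensional identity $\int_\RR|H_k(y+\mathrm i\tau)|^2\,\rd\gamma(y)=\sum_{m=0}^k\binom km\tau^{2m}/m!$ then produces, after tensorisation over $j\in\supp(\bs)$, exactly the binomial structure seen in $\sigma_\bs(\eta,\brho)^2=\sum_{|\bs'|_\infty\le\eta}\binom{\bs}{\bs'}\prod_j\rho_j^{2s_j'}$. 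This is the step where holomorphy manufactures the weight, and it is the main obstacle: a crude modulus bound only yields sub-exponential growth and is useless, so one must retain the cancellation, keep the truncation $|\bs'|_\infty\le\eta$, and pin down a single admissible reference scaling. The factor $(\eta!)^{-1/2}$ and the $\norm{\bb}{\ell_p(\NN)}$ in the definition of $\rho_j$ are precisely what absorbs the factorials from the Hermite normalisation while keeping the comparison sequence admissible.

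Granting a sufficiently strong decay bound $\|v_\bs\|_X\lesssim\varepsilon\prod_j a_j(s_j)$ from that first step, the estimate $\bigl(\sum_\bs(\sigma_\bs\|v_\bs\|_X)^2\bigr)^{1/2}\le M$ follows because the weight factorises as $\sigma_\bs^2=\prod_j\tau_j(s_j)$ with $\tau_j(s)=\sum_{s'=0}^{\min(s,\eta)}\binom s{s'}\rho_j^{2s'}$, so the whole weighted sum splits into a product of one-dimensional series. The decisive point is that the $\eta$-truncation turns the otherwise geometric weight $(1+\rho_j^2)^s$ into growth of only polynomial order $\eta$ in $s$, which is summable against the fast coefficient decay; the product over $j$ then converges because of the admissibility bound $\sum_j b_j\rho_j\le\xi$, and $M$ comes out proportional to $\varepsilon$, i.e.\ $M=\varepsilon C_{\bb,\xi,\eta}$.

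Finally, for $\norm{\bsigma^{-1}}{\ell_q(\NN)}\le N$ I would compute directly, using the same factorisation,
\[
	\sum_{\bs\in\FF}\sigma_\bs^{-q}=\prod_{j\in\NN}\Bigl(\sum_{s=0}^\infty\tau_j(s)^{-q/2}\Bigr).
\]
In each factor the term $s=0$ gives $1$; for $1\le s\le\eta$ one has $\tau_j(s)^{-q/2}=(1+\rho_j^2)^{-qs/2}\lesssim\rho_j^{-q}$ since $\rho_j>1$ (guaranteed by the standing hypothesis); and for $s>\eta$ the bound $\tau_j(s)\ge\binom s\eta\rho_j^{2\eta}\gtrsim(s/\eta)^\eta\rho_j^{2\eta}$ yields a tail $\lesssim\rho_j^{-q\eta}\sum_{s>\eta}s^{-q\eta/2}$, which converges provided $\eta>2/q$ (the threshold also appearing in Lemma~\ref{lemma:s=1}). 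Hence each factor equals $1+O(\rho_j^{-q})$, and the product is finite iff $\sum_j\rho_j^{-q}<\infty$. Substituting $\rho_j\asymp b_j^{p-1}$ gives $\rho_j^{-q}\asymp b_j^{(1-p)q}$, so the requirement is $\sum_j b_j^{(1-p)q}<\infty$; the choice $q=p/(1-p)$ makes this exactly $\sum_j b_j^{p}=\norm{\bb}{\ell_p(\NN)}^p<\infty$. This both forces the exponent relation $q=p/(1-p)$ claimed in the lemma and delivers $N=K_{\bb,\xi,\eta}$, completing the argument modulo the delicate per-coefficient decay estimate of the first paragraph.
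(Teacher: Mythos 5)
A preliminary remark: the paper contains no proof of this lemma at all --- it is imported verbatim as \cite[Theorem~4.9]{DNSZ2023}, whose proof follows the $L_2$-based technique of \cite{BCDM17}; your attempt therefore has to be measured against that argument. Several of your ingredients do match it: the reduction to finitely many variables via properties (i) and (iii), the Hermite addition formula with the identity $\int_\RR|H_k(y+\mathrm{i}\tau)|^2\,\rd\gamma(y)=\sum_{l=0}^{k}\binom{k}{l}\tau^{2l}/l!$, and the role of the factor $\sqrt{\eta!}$ are exactly the right tools. Your third paragraph (the bound on $\norm{\bsigma^{-1}}{\ell_q}$, correctly read as an $\ell_q(\FF)$-norm) is essentially the argument of the cited reference and is correct; in particular, your observation that the tail $\sum_{s>\eta}s^{-\eta q/2}$ forces $\eta>2/q$ is a genuine catch: this hypothesis is missing from the statement as reproduced in the paper (compare Lemma~\ref{lemma:s=1}), and without it $\norm{\bsigma^{-1}}{\ell_q(\FF)}$ really is infinite.

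The genuine gap lies in your first two paragraphs: the entire analytic content of the lemma, namely $\sum_{\bs\in\FF}(\sigma_\bs\|v_\bs\|_X)^2\le\varepsilon^2C_{\bb,\xi,\eta}^2$, is assumed rather than proved (``granting a sufficiently strong decay bound''), and the reduction you build on that assumption cannot be completed in the form you describe. The actual proof is not of the type ``termwise coefficient bound, then sum''; it bounds the whole weighted square sum \emph{collectively}. Writing $z_j=\varrho_je^{\mathrm{i}\theta_j}$, expanding $v_m(\by+\bz)$ by the addition formula, and applying Parseval both in $\by$ and in the angles $\theta_j$ (this is precisely where the cross terms cancel and where your one-dimensional identity enters), one obtains for the coefficients of the restriction $v_m$ and every $(\bb,\xi)$-admissible $\bvarrho$
\begin{equation*}
	\sum_{\bs}\Big(\prod_{j}\sum_{l=0}^{s_j}\binom{s_j}{l}\frac{\varrho_j^{2l}}{l!}\Big)\|v_\bs\|_X^2
	\ \le\ \norm{\varphi_m}{L_2(\RR^m;\gamma)}^2\ \le\ \varepsilon^2 ,
\end{equation*}
and the truncated weight \eqref{sigma_s} with $\rho_j=\varrho_j/\sqrt{\eta!}$ is dominated term by term by the full weight on the left, because $(\eta!)^{-l}\le 1/l!$ for $0\le l\le\eta$ (this is the entire purpose of the $\sqrt{\eta!}$ in the definition of $\rho_j$; as an aside, for $\sqrt{\eta!}\,\brho$ to be admissible the printed factor $\norm{\bb}{\ell_p(\NN)}$ in $\rho_j$ has to be read as in \cite[Theorem~4.9]{DNSZ2023}). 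The conclusion then follows by letting $m\to\infty$ via (iii). This inequality is a statement about all coefficients at once, and that collectivity is essential.

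If, as in your second paragraph, one retains only a termwise consequence and re-sums, the argument breaks. The only termwise bound your first paragraph can deliver --- with a single fixed admissible $\bvarrho$, which is what you explicitly set up (``pin down a single admissible reference scaling'') --- is $\|v_\bs\|_X\le\varepsilon\,w_\bs(\bvarrho)^{-1/2}$, where $w_\bs(\bvarrho)$ is the full weight in the display above. Feeding this into your factorized summation produces $\prod_j\big(\sum_{s\ge0}\tau_j(s)/w_j(s)\big)$, and this product \emph{diverges}: already the $s=1$ term of the $j$-th factor equals $(1+\rho_j^2)/(1+\eta!\,\rho_j^2)\ge 1/\eta!$, uniformly in $j$, so infinitely many factors exceed $1+1/\eta!$. (The cruder Cauchy--Rodrigues bound $\|v_\bs\|_X\le\varepsilon\sqrt{\bs!}\,\bvarrho^{-\bs}$, which you rightly dismiss, fails even faster.) Hence your claim that ``the product over $j$ then converges because of the admissibility bound'' is unsupported and, for the bounds actually obtainable from your first step, false. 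Rescuing a termwise route would require optimizing the admissible sequence $\bvarrho=\bvarrho_\bs$ separately for each $\bs$ --- the structure visible in the hypotheses of Lemma~\ref{lemma:s=1} --- but then the bound no longer has the product form $\prod_ja_j(s_j)$ on which your product-of-univariate-series computation rests. In short, the proposal names the correct tools, but it defers exactly the step that constitutes the theorem, and the summation scheme proposed in its place would not close the gap.
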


{Let $\RRi_0 \subset \RRi$ be a fixed set of full Gaussian measure such  that the pointwise evaluations  $u(\by)$ are  well-defined for every $\by \in \RRi_0$. (Such a set exists due to 
	Lemma~\ref{lemma:weighted summability, holomorphic}.)} By applying Corollary \ref{corollary:varrho_n(iii)}, from Lemma \ref{lemma:weighted summability, holomorphic}, and utilizing the homogeneous argument we obtain
 
 \begin{theorem}\label{theorem:sampling-lognormal-holomorphic} 
 	Let $v$ be
 	$(\bb,\xi,\varepsilon,X)$-holomorphic for some $\bb\in \ell_p(\NN)$ with 
 	$0< p <1$. 
     Then for any $n \in\NN$, there exist points 
$\by_1, \dots, \by_n \in \RRi_0$ and weights  $\omega_1, \dots, \omega_n$ such that 
 	\begin{equation} \label{bound1}
 		{\norm{v - \tilde{S}_n^X v}{L_2(\RRi,X; \bgamma)}}
 		\ \le C MN n^{-(1/p - 1/2)}
 	\end{equation}
 	with a constant $C$ independent of $n$ and {$v$},  	
 	where 
 	 $M,N$ are as in Lemma \ref{lemma:weighted summability, holomorphic} and
 	 $\tilde{S}_n^X$ is defined as in \eqref{tilde{S}_n^X:=}. 
 	\end{theorem}

    The convergence rate in Theorem \ref{theorem:sampling-lognormal-holomorphic} notably improves the result \cite[Theorem 6.13]{DNSZ2023} by a factor of $n^{-1/2}$.
 		
 	We present two examples of application of Theorem~\ref{theorem:sampling-lognormal-holomorphic} to parametric PDEs with random inputs. 
 First, let us revisit the parametric equation \eqref{parametricPDE} with 	log-normal random inputs \eqref{lognormal}. Hence, if
 $(\psi_j)_{j\in\NN}\subset L_\infty(D)$ such that $\bb\in\ell_1(\NN)$ with
 $b_j:=\norm{\psi_j}{L_\infty(D)}$, 
 then the parametric weak solution $u(\by)$ uniquely exists and
is {$(\bb,\xi,\varepsilon,V)$}-holomorphic by \cite[Theorem 4.11]{DNSZ2023}. For details, see \cite[Section 4.3.1]{DNSZ2023}. 
 	Assume in addition, that $\bb$ is a  sequence such that $\bb\in\ell_p(\NN)$
 for some $0 < p <2/3$. Then the parametric solution $u(\by)$ can be approximated by the extended least squares sampling algorithm $\tilde{S}_n^V$ defined as in Theorem \ref{theorem:sampling-lognormal-holomorphic} for $X=V$, for which we have the convergence rate \eqref{bound1}.
 	
Second, we apply Theorem \ref{theorem:sampling-lognormal-holomorphic} to solutions to parametric linear parabolic PDEs with log-normal random inputs. {To  establish this, let us  present the holomorphic properties of solutions of these equations.  For details, see \cite[Section 4.3.2]{DNSZ2023}.} 
 Let $0<T<\infty$ denote a finite time-horizon and let $D$ be a
 bounded domain with Lipschitz boundary $\partial D$ in $\RR^d$. We
 define $I:=(0,T)$ and consider the initial boundary value problem
 for the linear parabolic PDE 
\begin{equation}\label{eq:parabolic}
 	\begin{cases}
 		\frac{\partial u(t,\bx)}{\partial t} 
 		- {\rm div}\big(a(\bx)\nabla u(t,\bx)\big)=f(t,\bx), \qquad (t,\bx)\in I\times D,
 		\\
 		u|_{\partial D\times I}=0,
 		\\
 		u|_{t=0}=u_0(\bx).
 	\end{cases}
 \end{equation}
We write $V:=H_0^1(D;\CC)$ and $V':=H^{-1}(D;\CC)$. Let
\begin{equation} \label{X:=}
 	X := L_2(I,V)\cap H^1(I,V') = \big(L_2(I)\otimes V \big)\cap \big(H^1(I)\otimes V'\big)
 \end{equation}
 equipped with the sum norm
 \begin{equation}\label{normX:=}
 	\|u\|_{X} := \Big(\|u\|_{L_2(I,V)}^2 +\|u\|_{H^1(I,V')}^2 \Big)^{1/2},\quad u\in X,
 \end{equation}
 where
 $$
 \|u\|_{L_2(I,V)}^2 = \int_I \|u(t, \cdot) \|_V^2\, \rd t\, ,
 $$
 and
 $$
 \|u\|_{H^1(I,V')}^2 = \int_I \| \partial_t u(t, \cdot) \|_{V'}^2\, \rd
 t\,.
 $$
 To state a space-time variational formulation and to specify the data
 space for \eqref{eq:parabolic}, we introduce the test-function space
\begin{equation*}\label{eq:ParIBVPY}
 	Y 
 	= L_2(I,V)\times L_2(D) 
 	= \big(L_2(I)\otimes V \big) \times L_2(D)
 \end{equation*}
which we endow with the norm
\begin{equation*}
 	\|v\|_{Y}
 	=
 	\Big(\|v_1\|_{L_2(I,V)}^2 + \|v_2\|_{L_2(D)}^2\Big)^{1/2},\quad v=(v_1,v_2)\in Y\,.
 \end{equation*}
 Given a time-independent diffusion coefficient $a\in L_\infty(D;\CC)$
 and $(f,u_0)\in Y'$, the continuous sesqui-linear and anti-linear forms
 corresponding to the parabolic problem \eqref{eq:parabolic} read for
 $u\in X$ and $v=(v_1,v_2)\in Y$ as
 \begin{equation*}
 	\begin{split}
 		B(u,v)& := \int_I \int_D \partial_t u\,\overline{v_1}\rd \bx
 		\rd t + \int_I\int_D a\nabla u \cdot \overline{\nabla v_1} \rd
 		\bx \rd t + \int_D u_0\,\overline{v_2 } \rd \bx
 	\end{split}
 \end{equation*}
 and
 \begin{equation*}
 	\begin{split}
 		L(v) := \ \int_I \big\langle f(t,\cdot),v_1(t,\cdot) \big\rangle
 		\rd t + \int_D u_0 \,\overline{v_2 } \rd \bx,
 	\end{split}
 \end{equation*}
where $\langle \cdot,\cdot \rangle$ is the anti-duality pairing
 between $V'$ and $V$. Then the space-time variational (weak) formulation of
 equation \eqref{eq:parabolic} is: Find a weak solution $u \in X$ such that
\begin{equation}\label{para-weak}
 	B(u,v)=L(v),\quad \forall v\in Y\,.
 \end{equation}
 	Assume that $(f,u_0)\in Y'$ and that
 \begin{equation}\label{eq:para-uni}
 	0 < \rho(a) := \underset{\bx\in D}{\operatorname{ess \, inf}}\,\Re(a(\bx))
 	\leq |a(\bx)|
 	\leq \|a\|_{L_\infty(D)}
     <\infty,\qquad \bx\in D {.}
 \end{equation}
 Then there exists a unique solution $u$ to the equation
 \eqref{para-weak}.
 
 Consider the initial boundary value problem for the parametric linear parabolic PDE 
\begin{equation}\label{eq:p-parabolic}
	\begin{cases}
		\frac{\partial u(\by)(t,\bx)}{\partial t} 
		- {\rm div}\big(a(\by)(\bx)\nabla u(\by)(t,\bx)\big)=f(t,\bx), \quad (t,\bx)\in I\times D, \ \ \by \in \RRi,
		\\[1ex]
		u(\by)(\bx)|_{\partial D\times I}=0, \quad \by \in \RRi,
		\\[1ex]
		u(\by)(\bx)|_{t=0}=u_0(\bx), \quad \by \in \RRi,
	\end{cases}
\end{equation}
with log-normal inputs \eqref{lognormal}. 
If
$(\psi_j)_{j\in\NN}\subset D$ such that $\bb\in\ell_1(\NN)$ with
$b_j:=\norm{\psi_j}{L_\infty(D)}$, 
then the parametric weak solution $u(\by)$ uniquely exists and is {$(\bb,\xi,\varepsilon,X)$}-holomorphic by \cite[Theorem 4.11]{DNSZ2023}. 	
Assume in addition, that $\bb$ is a  sequence such that $\bb\in\ell_p(\NN)$
for some $0 < p <2/3$. Then the parametric weak solution $u(\by)$ to the equation~\eqref{eq:p-parabolic} can be approximated by the extended least squares sampling algorithm $\tilde{S}_{n}^X$ defined in Theorem~\ref{theorem:sampling-lognormal-holomorphic} for  the space $X$ as in \eqref{X:=} and \eqref{normX:=}, for which we have the convergence rate \eqref{bound1}.

\section{Constructiveness and alternative least squares methods}
\label{Constructiveness and alternative least squares methods}

In this section, we present several different sampling schemes to use with the least squares algorithms for functions in the RKHS $H_{\CC,\bsigma}$, and inequalities between sampling $n$-widths and Kolmogorov $n$-widths of the unit ball $B_{\CC,\bsigma}$. We explain then how to apply these inequalities to obtain corresponding convergence rates of linear sampling recovery in abstract Bochner spaces and {of  approximation} of the parametric solution $u(\by)$ to parametric elliptic or parabolic PDEs with log-normal or affine inputs, as well as of infinite-dimensional holomorphic functions.

The choice of points $\by_1, \dots, \by_n$, weights $\omega_1, \dots, \omega_n$, and approximation space $V_m$ is crucial for the error of the least squares approximation.
A lot of work has been done in the usual Lebesgue space $L_2(U,\CC;\mu)$ of which we present two more choices with a trade-off between constructiveness and tightness of the bound and transfer them to the Bochner space $L_2(U,X;\mu)$.

For $m\in\NN$, let the probability measure $\nu = \nu(m)$, introduced in \cite{KU21a}, be defined by
\begin{equation}\label{nu}
	\mathrm d\nu(\by)
	:= \varrho(\by)\mathrm d\mu(\by)
	:= \frac{1}{2}\left( \frac{1}{m}\sum_{s=1}^{m}|\varphi_s(\by)|^2
	+ \frac{\sum_{s=m+1}^{\infty}|\sigma_{s}^{-1}\varphi_s(\by)|^2}
	{\sum_{s=m+1}^{\infty}\sigma_{s}^{-2}} \right)\mathrm d\mu(\by) .
\end{equation}

{
\begin{assumption}\label{points}
    Let $m\ge 2$.
	\begin{itemize}
		\item[\rm (i)]
        Let $n = \lceil 40 m\log m\rceil$.
		Let further $\by_1  ,\dots \by_{n}\in U_0$ be points drawn i.i.d.\ with respect to $\nu$ and $\omega_i := (\varrho(\by_i))^{-1}$.
		\item[\rm (ii)]
		Let $\lceil 40 m\log m\rceil$ points be drawn i.i.d.\ with respect to $\nu$ and subsampled using \cite[Algorithm~3]{BSU23} to $n = \lceil bm\rceil$ points for some $b>1+1/m$.
        Denote the resulting points by $\by_1, \dots, \by_{n} \in U_0$ and $\omega_i = (\varrho(\by_i))^{-1}$.
	\end{itemize}
\end{assumption}

Note that (i) and (ii) in Assumption \ref{points} have been considered in \cite{KU21a} and 
\cite{BSU23}.
Recall that we use the abbreviation $d_n := d_n(B_{\CC,\bsigma},L_2(U,\CC;\mu))$.

\begin{lemma}\label{lemma:leastsquaresbounds}
	Let 
	$S_{{n}}^X 
	:=
	S_{{n}}^X(\by_1, \dots, \by_{{n}}, \omega_1, \dots, \omega_{{n}}, V_m) $ 
	 be the extended least squares algorithm defined as in \eqref{S_nX}.
    Then we have the following.
	\begin{enumerate}
		\item[\rm (i)]
        The points and weights from Assumption~\ref{points}{\rm (i)} fulfill with probability exceeding $1-(160\log n)/n$
        \begin{equation} \nonumber
            \sup_{v\in B_{X,\bsigma}} \norm{v - S_{n}^X v}{L_2(U,X;\mu)}
            \le 19 \max \Big\{ d_{\lfloor n/(40\log n)\rfloor}, 
            \sqrt{\frac{\log n}{n} \sum_{s \ge \lfloor n/(40\log n)\rfloor} d_s^2 } \Big\} \,.
        \end{equation}
		\item[\rm (ii)]
        The points and weights from Assumption~\ref{points}{\rm (ii)} fulfill with probability exceeding $1-8b/n$
		\begin{equation}\nonumber
			\sup_{v\in B_{X,\bsigma}} \norm{v - S_{n}^X v}{L_2(U,X;\mu)}
            \le 600  \Big(\frac{b+1}{b-1}\Big)^{3/2} 
            \max \Big\{ \sqrt{\log n} \ d_{\lfloor n/(2b)\rfloor}, \sqrt{ \frac{\log n}{n} 
            	\sum_{s\ge \lfloor n/(2b)\rfloor} d_s^2 } \Big\} \,.
		\end{equation}
		\item[\rm (iii)]
        There exist points $\by_1, \dots, \by_n\in U_0$ and weights $\omega_i\ge 0$ such that
		\begin{equation}\nonumber
			\sup_{v\in B_{X,\bsigma}} \norm{v - S_{n}^X v}{L_2(U,X;\mu)}
            \ \le  4325 \max\Big\{ d_{\lfloor n/43200\rfloor}, \sqrt{\frac{1}{n} 
            	\sum_{s \ge \lfloor n/43200\rfloor} d_s^2 } \Big\}.
		\end{equation}
	\end{enumerate}
\end{lemma}

\begin{proof}
    By Theorem \ref{thm:sameleastsquares}, it suffices to show the inequalities for $X = \CC$.
    To prove (i) we use \cite[Theorem~7.7]{barteldiss} with $t=\log m$.
    An intermediate step in the proof states with probability exceeding $1-4\exp(-t)$
    \begin{equation} \nonumber
        \sup_{v\in B_{X,\bsigma}} \norm{v - S_{n}^X v}{L_2(U,X;\mu)}
        \le \sqrt{ 3 d_{m}^2 + \frac{168 \log n}{n} \sum_{s\ge m+1} d_s^2 }
        \le 19 \max\Big\{ d_{m}, \sqrt{ \frac{\log n}{n} \sum_{s\ge m+1} d_s^2 } \Big\} \,.
    \end{equation}
    From the assumption on $n$ we have $ n/(40\log n) \le m$, which yields the first assertion and the desired probability $1-4\exp(-t) = 1-4/m \ge 1-(160\log n)/n$.

    To prove (ii) we use \cite[Theorem~7.8]{barteldiss} with $t=\log m$.
    An intermediate step in the proof states with probability exceeding $1-4\exp(-t) = 1-4/m \ge 1-8b/n$
    \begin{align*}
        \sup_{v\in B_{X,\bsigma}} \norm{v - S_{n}^X v}{L_2(U,X;\mu)}
        &\le \sqrt{ 89 \frac{(b+1)^2}{(b-1)^3} \frac{\lceil 40 m \log m\rceil}{m}
        \Big( 3 d_{m}^2 + \frac{168 \log \lceil 40 m\log m\rceil }{\lceil 40 m\log m\rceil } \sum_{s\ge m+1} d_s^2 \Big) } \\
        &\le \sqrt{ 3596 \frac{(b+1)^2}{(b-1)^3} \log m
        \Big( 3 d_{m}^2 + \frac{168 \log \lceil 40 m\log m\rceil }{\lceil 40 m\log m\rceil } \sum_{s\ge m+1} d_s^2 \Big) } \,.
    \end{align*}
    With $m\ge 2$ we have
    \begin{equation*}
        \frac{168 \log \lceil 40 m\log m\rceil }{\lceil 40 m\log m\rceil }
        \le \frac{168 \log ( 41 m\log m) }{40 m\log m}
        \le \frac{25}{m} \,.
    \end{equation*}
    Thus
    \begin{align*}
        \sup_{v\in B_{X,\bsigma}} \norm{v - S_{n}^X v}{L_2(U,X;\mu)}
        &\le \sqrt{ 3596 \frac{(b+1)^2}{(b-1)^3} \log m
        \Big( 3 d_{m}^2 + \frac{25}{m} \sum_{s\ge m+1} d_s^2 \Big) } \\
        &\le \sqrt{ 359600 \Big(\frac{b+1}{b-1}\Big)^3 \log n
        \max\Big\{ d_{\lfloor n/(2b)\rfloor}^2, \frac{1}{n} \sum_{s\ge \lfloor  n/(2b)\rfloor} d_s^2 \Big) \Big\} } \,.
    \end{align*}
    
    Part (iii) of the assertion is given in \cite[Theorem~23]{DKU2023} 
    (see Lemma \ref{lemma:leastsquaresbounds(iii)}).
	\hfill
\end{proof}
}

Regarding the constructiveness of the linear sampling algorithms in Lemma~\ref{lemma:leastsquaresbounds}, the bound in Lemma~\ref{lemma:leastsquaresbounds}(i) is the coarsest bound, but the points construction requires only a random draw, which is computationally inexpensive.
The sharper bound in Lemma~\ref{lemma:leastsquaresbounds}(ii) uses an additional constructive subsampling step.
This was implemented and numerically tested in \cite{BSU23} for up to 1000 basis functions.
For larger problem sizes the current algorithm is too slow as its runtime is cubic in the number of basis functions.
The sharpest bound in Lemma~\ref{lemma:leastsquaresbounds}(iii) is a pure existence result.
So, the only way to obtain this point set is to brute-force every combination, which is computational infeasible.

Similarly to the proof of Corollary \ref{corollary:varrho_n(iii)}, one can prove the following results on convergence rates of the extended least squares sampling algorithms described in Lemma \ref{lemma:leastsquaresbounds}.

\begin{corollary} \label{corollary:varrho_n}
	Let $0< q < 2$ and {$\|\bsigma^{-1}\|_{\ell_q(\NN)} \le 1$}. 
		Let 
	$S_{{n}}^X 
	:=
	S_{{n}}^X(\by_1, \dots, \by_{{n}}, \omega_1, \dots, \omega_{{n}}, V_m) $ 
	be the extended least squares algorithm defined as in \eqref{S_nX}.
	There are universal constants $c_1, c_2, c_3 \in \NN$ such that for all $n \ge 2$ we have the following.
	\begin{enumerate}
		\item[\rm (i)]
        The points and weights from Assumption~\ref{points}{\rm (i)} fulfill with high probability
		\begin{equation}\nonumber
			\sup_{v\in B_{X,\bsigma}} \norm{v - S_{c_1n}^X v}{L_2(U,X;\mu)}
            \ \lesssim n^{-1/q} (\log n)^{1/q}.
		\end{equation}
		\item[\rm (ii)]
        The points and weights from Assumption~\ref{points}{\rm (ii)} fulfill with high probability
		\begin{equation}\nonumber
			\sup_{v\in B_{X,\bsigma}} \norm{v - S_{c_2n}^X v}{L_2(U,X;\mu)}
			\ \lesssim n^{-1/q}  (\log n)^{1/2}.
	\end{equation}
	\item[\rm (iii)]
    There exist points $\by_1, \dots, \by_n\in U_0$ and weights $\omega_i\ge 0$ such that
	\begin{equation}\nonumber
		\sup_{v\in B_{X,\bsigma}} \norm{v - S_{c_3n}^X v}{L_2(U,X;\mu)}
		\ \lesssim n^{-1/q}.
	\end{equation}
	\end{enumerate}	
\end{corollary}	

One can apply the last corollary to parametric PDEs with random inputs and infinite-dimensional holomorphic functions to receive counterparts of all the results in Sections \ref{Applications to parametric PDEs with random inputs} and \ref{Applications to holomorphic functions} on extended least squares sampling recovery, based on the sample points from 
Assumption~\ref{points}{\rm (i)}--{\rm (iii)}.
In particular, by applying Corollary \ref{corollary:varrho_n}, from Lemmata \ref{lemma:weighted summablity,lognormal} and
\ref{lemma:weighted summablity,affine} we obtain:

\begin{corollary}\label{corollary:sampling-lognormal-affine}
	Let 
	$S_{{n}}^V 
	:=
	S_{{n}}^V(\by_1, \dots, \by_{{n}}, \omega_1, \dots, \omega_{{n}}, V_m) $ 
	be the extended least squares algorithm defined as in \eqref{S_nX} for $X=V$.
	Under the assumption  of Theorem \ref{theorem:sampling-lognormal-affine(iii)},
there are universal constants $c_1, c_2,c_3\in \NN$ such that for all $n \ge 2$ we have the following.
	\begin{enumerate}
		\item[\rm (i)]
        The points and weights from Assumption~\ref{points}{\rm (i)} fulfill with high probability
		\begin{equation*}
			\norm{u - S_{c_1n}^V u}{L_2(\UUi,V;\bmu)}
			\ \le C MN n^{-1/q} (\log n)^{1/q}.
		\end{equation*}
		\item[\rm (ii)]
        The points and weights from Assumption~\ref{points}{\rm (ii)} fulfill with high probability
		\begin{equation*}
			\norm{u - S_{c_2n}^V u}{L_2(\UUi,V;\bmu)}
			\ \le C MN n^{- 1/q} (\log n)^{1/2}.
		\end{equation*}	
			\item[\rm (iii)]
        There exist points $\by_1, \dots, \by_n\in U_0$ and weights $\omega_i\ge 0$ such that
		\begin{equation*}
			\norm{u - S_{c_3n}^V u}{L_2(\UUi,V;\bmu)}
			\ \le C MN n^{- 1/q}.
		\end{equation*}	
	\end{enumerate}	 	
	The constants $C$ in the above inequalities are independent of $M,N,n$ and $u$.
\end{corollary}

We compare the results in Theorem \ref{theorem:sampling-lognormal-affine(iii)}  Corollary~\ref{corollary:sampling-lognormal-affine} with the known results in prior works.
In the affine case \eqref{affine}, the convergence rate $(n/\log n)^{-1/q}$ (with $1/q:= 1/p - 1/2$) in terms of the number $n$ of sampling points has been received in \cite{CCMNT2015} (see also \cite{CM2018}) for an  \emph{adaptive} least squares approximation ``lifted"  to Hilbert-valued functions, based on an $\ell_q$-summability of the Legendre GPC expansion coefficients of the parametric solution, and on an adaptive choice of a sequence of finite dimensional approximation spaces, which is different from the linear extended least squares approximation in Corollary~\ref{corollary:sampling-lognormal-affine}(i). Notice also that the result in Corollary~\ref{corollary:sampling-lognormal-affine}(i) for the affine case could be also proven by a linear modification of the technique used in \cite{CCMNT2015}, based on the weighted $\ell_2$-summability \eqref{WS-affine}. 
The convergence rate $(n/\log n)^{-1/q}$ (with $1/q:= 1/p - 1/2$) of sampling recovery of $(\bb,\varepsilon)$-holomorphic functions on $\IIi$ has been proven in \cite{ADM2024} based on the least squares procedure of \cite{KU21} ``lifted" to Hilbert-valued functions, where $\varepsilon >0$, $\bb=\brac{b_j}_{j \in \NN} \in \ell_p(\NN)$ is a positive sequence and $0<p<1$. The results presented in this paper for the affine case \eqref{affine} in Theorem~\ref{theorem:sampling-lognormal-affine(iii)} and Corollary~\ref{corollary:sampling-lognormal-affine} improve upon those in the cited literature by a logarithm factor of $(\log n)^{-1}$.

 \medskip
 \noindent
 {\bf Acknowledgments:} 
The work of Dinh D\~ung is funded by the Vietnam National Foundation for Science and Technology Development (NAFOSTED) in the frame of the NAFOSTED--SNSF Joint Research Project under Grant 
IZVSZ2$_{ - }$229568. 
Felix~Bartel acknowledges the financial support from the Australian Research Council Discovery Project DP240100769.
 A part of this work was done when the authors were working at the Vietnam Institute for Advanced Study in Mathematics (VIASM). They would like to thank the VIASM for providing a fruitful research environment and working condition.
 
\bibliographystyle{abbrv}

\end{document}